\documentclass{amsart}
\usepackage{tikz}
\usepackage{booktabs}
\usepackage{graphicx}
\usepackage{hyperref}
\usepackage{xparse}

\usepackage[utf8x]{inputenc}
\usepackage[T2A]{fontenc}

\usepackage{amsmath,amsthm,amsopn,amstext,amscd,amsfonts,amssymb,mathrsfs,mathtools}
\usepackage{dsfont}
\usepackage{comment}
\usepackage{xcolor}
\usepackage{float}
\usepackage[active]{srcltx}
\usepackage{graphicx, mathdots}

\usepackage{mathtools}

\newtheorem{problem}{\sc Problem}[section]
\newtheorem{definition}{\sc Definition}[section]
\newtheorem{theorem}{\sc Theorem}[section]

\newtheorem{lemma}{\sc Lemma }[section]
\newtheorem{eje}{\sc Example }[section]
\newtheorem{coro}{\sc Corollary}[section]

\usetikzlibrary{decorations.pathreplacing}
\usepackage{xcolor}

\usepackage{listings}
\usepackage{xcolor}

\definecolor{codebg}{RGB}{248,248,248}
\definecolor{codeframe}{RGB}{220,220,220}
\definecolor{codenumbers}{RGB}{120,120,120}
\definecolor{pykeyword}{RGB}{0,92,184}
\definecolor{pystring}{RGB}{163,21,21}
\definecolor{pycomment}{RGB}{87,87,87}

\usepackage{listings}
\usepackage{xcolor}

\definecolor{codebg}{RGB}{248,248,248}
\definecolor{codeframe}{RGB}{220,220,220}
\definecolor{codenumbers}{RGB}{120,120,120}

\definecolor{wlkeyword}{RGB}{0,92,184}   
\definecolor{wlstring}{RGB}{163,21,21}   
\definecolor{wlcomment}{RGB}{90,90,90}   

\lstdefinelanguage{Wolfram}{
  morekeywords={
    ClearAll,Clear,Module,Block,With,Set,SetDelayed,Rule,RuleDelayed,
    Table,Do,While,If,Which,For,Return,Function,Map,MapIndexed,MapThread,
    Apply,ReplaceAll,ReplaceRepeated,DeleteCases,Select,Sort,SortBy,Tuples,
    ConstantArray,Length,Append,Join,Total,Sum,Product,Exp,Sin,Cos,Conjugate,
    Simplify,FullSimplify,Expand,Coefficient,Exponent,Solve,NSolve,N,Chop,Print
  },
  sensitive=true,
  morecomment=[l]{(*},
  morecomment=[s]{(*}{*)},
  morestring=[b]",
}

\lstdefinestyle{wlcode}{
  language=Wolfram,
  basicstyle=\ttfamily\small,
  backgroundcolor=\color{codebg},
  frame=single,
  rulecolor=\color{codeframe},
  framerule=0.4pt,
  columns=fullflexible,
  keepspaces=true,
  showstringspaces=false,
  breaklines=true,
  breakatwhitespace=false,
  tabsize=2,
  numbers=left,
  numberstyle=\scriptsize\color{codenumbers},
  numbersep=8pt,
  xleftmargin=2.2em,
  framexleftmargin=1.6em,
  keywordstyle=\color{wlkeyword}\bfseries,
  commentstyle=\color{wlcomment}\itshape,
  stringstyle=\color{wlstring},
}

\usepackage{algorithm}
\usepackage{algpseudocode}
\usepackage[most]{tcolorbox}

\tcbset{
  algobox/.style={
    enhanced,
    colback=black!3,       
    colframe=black!15,     
    boxrule=0.4pt,
    arc=2mm,
    left=2mm,right=2mm,top=1.5mm,bottom=1.5mm,
  }
}

\usepackage[active]{srcltx}

\usepackage{graphicx, epsfig, subfig}
\usepackage{lscape}
\usepackage{rotating}
\usepackage{caption}
\usepackage{multicol}
\usepackage{colortbl}
\usepackage{nicematrix}

\begin{document}

\title[Wendroff's Theorem beyond consecutive degrees]{Wendroff's Theorem Beyond Consecutive Degrees and Related Inverse Spectral Problems}

\author{K. Castillo}
\address{CMUC, Department of Mathematics, University of Coimbra, 3000-143 Coimbra, Portugal}
\email{kenier@mat.uc.pt}

\author{G. Gordillo-N\'u\~nez}
\address{CMUC, Department of Mathematics, University of Coimbra, 3000-143 Coimbra, Portugal}
\email{up202310693@up.pt}

\subjclass[2010]{42C05; 47B36; 65D32; 15A18}
\date{\today}
\keywords{Wendroff's Theorem; Jacobi matrix; Unitary pentadiagonal matrix; Inverse Spectral Problem; Eigenvalue; Strict interlacing inequalities}

\begin{abstract}
{
A classical theorem of Wendroff shows that one may reconstructs a sequence of orthogonal polynomials on the real line from two non-constant polynomials of consecutive degrees whose zeros strictly interlace on the real line. In this note we extend this result to arbitrary non-constant polynomials. The reconstruction may be formulated via a Vandermonde-type linear system and recast as an underdetermined inverse spectral problem, in which the spectra of a finite Jacobi matrix and of one of its leading principal submatrices are prescribed. In addition, the analogous result on the unit circle is established by reconstructing a sequence of paraorthogonal polynomials from two arbitrary non-constant polynomials whose zeros strictly interlace on the unit circle. In this setting, the Jacobi matrix is replaced by a finite unitary pentadiagonal matrix, and the spectral data consist of the spectrum of the full matrix together with that of a rank-one perturbation of a leading principal submatrix. Strict interlacing of zeros is shown to be a necessary and sufficient condition for solvability, and explicit constructions of the associated polynomial families and matrices are provided. Finally, an algorithm and several illustrative examples are presented.
}
\end{abstract}

\maketitle
\section{Introduction}\label{sec:intro}
In 1961, Wendroff published a short note \cite{Wendroff1961} motivated by the following question: {  Given \(n\) distinct real numbers \(\{x_k\}_{k=1}^n \subset [a,b]\), can they be realised as the zeros of the degree-\(n\) polynomial of an orthogonal polynomial sequence on \([a,b]\)?} Wendroff proved that the answer is affirmative, and in fact established a stronger statement. Namely, if one also prescribes $n-1$ distinct real numbers $\{y_k\}_{k=1}^{n-1}$ that strictly interlace the $\{x_k\}_{k=1}^n$ in the sense that
\[
x_i<y_i<x_{i+1}, \quad i=1,\dots,n-1,
\]
then there exists a { (finite) sequence $\{P_k\}_{k=0}^{n}$ of monic polynomials
orthogonal with respect to some positive Borel measure $\mu$ supported on $[a,b]$, i.e.
\[
\int P_j(x)\,P_k(x)\,d\mu(x)=0,\quad j\neq k,
\]
such that}
\[
P_n(x)=(x-x_1)\cdots(x-x_n), \quad
P_{n-1}(x)=(x-y_1)\cdots(x-y_{n-1}).
\]
{ Of course, this finite orthogonal family admits infinitely many extensions to a full sequence of orthogonal polynomials on the real line (OPRL): one may continue the corresponding three-term recurrence beyond degree $n$ in infinitely many ways, and Favard's theorem (see, for instance, \cite[Theorem~2.9]{CastilloPetronilho}) then produces an OPRL for each such continuation. Wendroff's result had already appeared, albeit without proof and only in passing, in a footnote of Geronimus from 1946 \cite[p.~744]{Geronimus1946}. We follow standard usage in referring to this result as \emph{Wendroff’s theorem}; see, for instance, \cite[Remark~3, p.~999]{MartinezFinkelshteinSimanekSimon2019}.}
From a modern viewpoint, Wendroff's theorem fits naturally into inverse spectral theory for finite Jacobi matrices. Indeed, if $\{P_k\}_{k\ge0}$ is a monic OPRL family, then it satisfies the three-term recurrence
\[
P_{-1}(x)=0,\quad P_0(x)=1,\quad
P_{k+1}(x)=(x-\beta_k)P_k(x)-\gamma_k P_{k-1}(x),\quad k\ge0.
\]
{  In particular, one has \(\beta_k\in\mathbb{R}\) and \(\gamma_k>0\) for \(k\ge 1\).} From these coefficients one forms the $n\times n$ Jacobi matrix
\begin{equation}
\label{eq:jacobi-matrix}
    J_n =
\begin{bmatrix}
\;\beta_0   & 1        &          &          &          &        \\[3pt]
\;\gamma_1  & \beta_1  & 1        &          &          &        \\[3pt]
          & \gamma_2 & \beta_2  & 1   &          &        \\[3pt]
          &          & \ddots   & \ddots   & \ddots   &        \\[3pt]
          &          &          & \gamma_{n-2} & \beta_{n-2} & 1\; \\[3pt]
          &          &          &          & \gamma_{n-1} & \beta_{n-1}\;
\end{bmatrix}.
\end{equation}
For $m\le n$ we denote by $J_m$ its leading principal submatrix of order $m$. If $\Delta_k(x)=\det(xI_k-J_k)$ with $\Delta_0(x)=1$, then a cofactor expansion shows that $\{\Delta_k\}_{k\ge0}$ satisfies the same recurrence as $\{P_k\}_{k\ge0}$ with the same initial conditions (see, e.g., \cite[Remark~3.4]{CastilloPetronilho}). Hence, for each $k\ge1$,
\[
P_k(x)=\det(xI_k-J_k),
\]
or, equivalently,
\[
\mathrm{zeros}(P_k)=\sigma(J_k).
\]
Consequently, Wendroff's question may be viewed as an inverse eigenvalue problem: can one construct a Jacobi matrix $J_n$ whose spectrum is the prescribed set $\{x_k\}_{k=1}^n$? Moreover, his theorem is precisely an existence statement for a two-spectra configuration: prescribing two consecutive spectra $Z_n$ and $Z_{n-1}$ (equivalently, the zeros of $P_n$ and $P_{n-1}$), strict interlacing guarantees the existence of a Jacobi matrix $J_n$ such that
\[
\sigma(J_n)=Z_n, \quad \sigma(J_{n-1})=Z_{n-1}.
\]

Independently (and after Wendroff's note), Hochstadt initiated the finite two-spectra theory for Jacobi matrices in 1967. In \cite{Hochstadt1967} he proved an algebraic uniqueness result in this setting, and in \cite{Hochstadt1974} he established existence and derived an explicit reconstruction procedure, formulated in terms of trailing principal submatrices. This is essentially the same configuration as Wendroff's once one observes that trailing and leading blocks are equivalent up to index reversal. Thus, Wendroff's and Hochstadt's results may be regarded as complementary facets of the same principle connecting OPRL and Jacobi matrices.

After Hochstadt, inverse eigenvalue problems for Jacobi matrices have attracted sustained attention, driven both by the intrinsic structure of tridiagonal models and by applications (notably vibration theory and other one-dimensional discrete models in mathematical physics). A common theme is to prescribe enough data to make the reconstruction determined (or overdetermined), matching the $2n-1$ degrees of freedom of an $n\times n$ Jacobi matrix. For instance, the interlacing structure of spectra of several principal submatrices is developed in \cite{HigginsJohnson2016}, and a recent ``mixed data'' determined problem (combining spectral and structural information) is treated in \cite{YangWuZhang2023}; see also the application-driven accounts \cite{Gladwell1986,Kay2010,VinetZhedanov2012}.

The present paper takes a different route. { The aim is to extend Wendroff’s theorem beyond consecutive degrees, and indeed much further, by establishing analogous results for polynomials with zeros on the unit circle.} Fix $1 \le m<n$ and two sets of distinct real numbers
\[
Z_n=\{x_k\}_{k=1}^{n}, \quad Z_m=\{y_k\}_{k=1}^{m}.
\]
We ask when there exists a finite OPRL family $\{P_k\}_{k=0}^{n}$ such that
\[
\mathrm{zeros}(P_n)=Z_n,\quad
\mathrm{zeros}(P_m)=Z_m.
\]
Equivalently, in Jacobi--matrix language, we prescribe the spectra of $J_n$ and of its leading principal submatrix $J_m$. When $m=n-1$ this is the classical two-spectra regime (Wendroff--Hochstadt), where the data are determined and the solution is unique. For $m<n-1$, however, one prescribes only $m+n$ real numbers, so the inverse problem is genuinely underdetermined and uniqueness cannot be expected. More precisely, we consider the following problem.

\begin{problem}[{Underdetermined} two-spectra problem for Jacobi matrices]\label{prob:underdetermined-oprl}
Fix integers $1\le m<n$ and distinct real numbers
\[
Z_n=\{x_k\}_{k=1}^{n}, \quad Z_m=\{y_k\}_{k=1}^{m}.
\]
Determine necessary and sufficient conditions on their mutual position under which there exists a Jacobi matrix $J_n$ such that, with $J_m$ its leading principal submatrix of order $m$,
\[
\sigma(J_n)=Z_n,\quad \sigma(J_m)=Z_m.
\]
\end{problem}

By the standard eigenvalue interlacing for principal submatrices, an appropriate strict interlacing between $\{x_k\}_{k=1}^{n}$ and $\{y_k\}_{k=1}^{m}$ is necessary. Our main result shows that, even in the underdetermined regime $m<n-1$, this condition is also sufficient. We use the following definition of strict interlacing on $\mathbb{R}$.

\begin{definition}[Strict interlacing on $\mathbb{R}$]\label{def:strict-interlacing-oprl}
Let $x_1<\cdots<x_n$ and $y_1<\cdots<y_m$ with $m<n$. We say that
$\{y_k\}_{k=1}^m$ strictly interlaces $\{x_j\}_{j=1}^n$ if there exist so-called interlacing indices
\[
i_0=0<i_1<\cdots<i_m<n=i_{m+1}
\]
such that $y_k \in (x_{i_k},x_{i_k+1})$, for $k=1,\dots,m$. 
Equivalently, each open interval $(x_i,x_{i+1})$ contains at most one point of $\{y_k\}_{k=1}^m$.
\end{definition}

In particular, this yields an extension of Wendroff's theorem beyond consecutive degrees: if two sets of simple real zeros $\{x_j\}_{j=1}^n$ and $\{y_k\}_{k=1}^m$ satisfy strict interlacing in the above sense, then there exists an OPRL family whose degree-$n$ and degree-$m$ members have precisely these zeros (equivalently, there exists a Jacobi pair $J_m\subset J_n$ with the prescribed spectra).

We also consider the analogous question on the unit circle. Let
\[
\mathbb{D}=\{z\in\mathbb{C}:|z|<1\},\quad
\mathbb{S}^1=\{z\in\mathbb{C}:|z|=1\}.
\]
{ 
{ Given a positive Borel measure $\nu$ on $\mathbb S^1$, let
$\{\Phi_k\}_{k\ge0}$ denote the corresponding sequence of monic polynomials
orthogonal with respect to $\nu$ (OPUC), i.e.
\[
\int \Phi_j(z)\,\overline{\Phi_k(z)}\,d\nu(z)=0,\quad j\neq k.
\]
For each $k\ge0$, define the reversed polynomials
\[
\Phi_k^*(z)=z^k\,\overline{\Phi_k(1/\overline z)}.
\]
The sequence $\{\Phi_k\}_{k\ge0}$ is uniquely encoded by the Verblunsky coefficients
$\alpha_k\in\mathbb D$ through Szeg\H{o}'s recurrence
\[
\Phi_{k+1}(z)=z\Phi_k(z)-\overline{\alpha_k}\,\Phi_k^*(z),
\quad
\Phi_{k+1}^*(z)=\Phi_k^*(z)-\alpha_k\,z\,\Phi_k(z),
\quad k\ge0,
\]
(where, equivalently, $\alpha_k=-\overline{\Phi_{k+1}(0)}$).}
Fix $n\ge1$ and a boundary parameter $b_n\in\mathbb{S}^1$, and form the so-called
degree-$n$ paraorthogonal polynomial on the unit circle (POPUC)
\begin{equation}
\label{eq:popuc}
\Psi_n(z)=z\Phi_{n-1}(z)-\overline{b_n}\,\Phi_{n-1}^*(z).
\end{equation}
Then $\Psi_n$ may be viewed as the unit-circle analogue of the real-line object in Wendroff's theorem:
its zeros are simple and lie on $\mathbb S^1$, and varying the boundary parameter $b_n\in\mathbb S^1$
induces a separation (indeed, an interlacing) of these zeros along $\mathbb S^1$; see \cite[Ch.~2]{S05I}. This leads naturally to a Wendroff-type inverse problem on $\mathbb S^1$:
given a finite set of points on $\mathbb S^1$, when can it be realised as the zero set of a POPUC?
More generally, given two prescribed finite sets on $\mathbb S^1$, when do there exist Verblunsky
coefficients $\{\alpha_k\}_{k\ge0}\subset\mathbb D$ and boundary parameters $b_n,b_m\in\mathbb S^1$
such that the corresponding POPUC of degrees $n$ and $m$ have exactly those sets as their zero sets?
}

The finite-dimensional spectral model underlying $\Psi_n$ is a unitary pentadiagonal matrix, which we denote by
\[
\mathcal{C}_n=\mathcal{C}(\alpha_0,\dots,\alpha_{n-1},b_n),
\]
constructed canonically from $\alpha_0,\dots,\alpha_{n-1}\in\mathbb{D}$ and $b_n\in\mathbb{S}^1$. { The matrix $\mathcal{C}_n$,  which
is unitarily similar to the Schur parametric form of an upper Hessenberg matrix with
positive subdiagonal elements \cite{BohnhorstBunseGerstnerFassbender2000}, was presented by Bunse-Gerstner and Elsner \cite{BE91} (see \cite[Section $12.2.10$]{GV83} and \cite[Definition $3.3$ and Lemma $3.4$]{B93}).  The explicit unitary pentadiagonal or double-staircase form of $\mathcal{C}_n$ (referred as {\em Doppel-Treppen-Matrix} in the original German source) was studied extensively by Bohnhorst \cite{B93}, see \cite[Equation $3.9$]{B93} and \cite[Figure $1.1$]{KN07} for an $8$-by-$8$ example. The matrix $\mathcal{C}_n$ becomes a very popular object in the Mathematical Physics and Orthogonal Polynomials communities after the work \cite{CanteroMoralVelazquez2003}, specially after Simon's monographs \cite{S05I, S05II}  where it was called (improperly) CMV matrix (see \cite{S07b}).}
 Its characteristic polynomial is, up to a unimodular constant, $\Psi_n$, and hence
\[
\mathrm{zeros}(\Psi_n)=\sigma(\mathcal{C}_n).
\]
Accordingly, the above Wendroff-type questions on $\mathbb{S}^1$ are equivalent to inverse spectral problems for unitary pentadiagonal matrices.

{  General interlacing theorems for these matrices and their leading principal submatrices of arbitrary order, in connection with rank-one perturbations, may be found in \cite{CP18}.} Converse theorems in this direction have been obtained in several forms. A concrete consecutive-degree analogue appears in \cite{CastilloCruzPerdomo2016}: under the appropriate strict interlacing condition on $\mathbb{S}^1$ (allowing at most one common zero), two consecutive POPUC arise from a unique choice of parameters; see \cite[Theorem~4.1]{CastilloCruzPerdomo2016}. A broader viewpoint, including converse statements for Blaschke products and POPUC together with parameter counting in the unitary setting, is developed in \cite{MartinezFinkelshteinSimanekSimon2019}.

Here we again move beyond consecutive degrees. Fix integers $1\le m<n$ and two finite sets of distinct points on $\mathbb{S}^1$,
\[
Z_n=\{\xi_k\}^n_{k=0}\subset\mathbb{S}^1,\quad
Z_m=\{\zeta_k\}^m_{k=0}\subset\mathbb{S}^1.
\]
We ask for conditions ensuring that $Z_n$ and $Z_m$ can be realized as spectra of a compatible pair of unitary pentadiagonal matrices of orders $n$ and $m$ (equivalently, as zero sets of POPUC of degrees $n$ and $m$). As on the real line, when $m=n-1$ one is in the consecutive (determined) regime, while for $m<n-1$ the problem is underdetermined, in contrast with the determined inverse unitary
pentadiagonal problems treated in, e.g., \cite{ArlinskiiGolinskiiTsekanovskii2008,GolinskiiKudryavtsev2009, GolinskiiKudryavtsev2010}. We note that \cite{GolinskiiKudryavtsev2010} also considers underdetermined inverse problems of a different nature; however, in contrast with the present work, those settings are not given a complete solvability characterization in terms of necessary and sufficient conditions. More precisely, we consider the following problem on $\mathbb{S}^1$.

\begin{problem}[{Underdetermined} two-spectra problem for unitary pentadiagonal matrices]\label{prob:underdetermined-POPUC}
Fix integers $1\le m<n$ and two sets
\[
Z_n=\{\xi_k\}^n_{k=0}\subset\mathbb{S}^1,\quad
Z_m=\{\zeta_k\}^m_{k=0}\subset\mathbb{S}^1.
\]
with all points distinct. Determine necessary and sufficient conditions on their relative position along $\mathbb{S}^1$ under which there exist parameters
\(
\alpha_0,\dots,\alpha_{n-1}\in\mathbb{D},\)
and \(b_n,b_m\in\mathbb{S}^1,\) such that the associated unitary pentadiagonal matrices
\[
\mathcal{C}_n=\mathcal{C}(\alpha_0,\dots,\alpha_{n-1},b_n),\quad
\mathcal{C}_m=\mathcal{C}(\alpha_0,\dots,\alpha_{m-1},b_m)
\]
satisfy
\[
\sigma(\mathcal{C}_n)=Z_n,\quad \sigma(\mathcal{C}_m)=Z_m.
\]
\end{problem}

As in the real-line setting, strict interlacing is necessary by the standard separation of POPUC zeros (equivalently, of eigenvalues under boundary variation). {  The main result shows that, in the nonconsecutive regime \(m<n-1\), this condition is also sufficient. Consequently, Wendroff's theorem is extended beyond consecutive degrees on the unit circle. In particular, the following definition of strict interlacing on the unit circle will be used.}

\begin{definition}[Strict interlacing on $\mathbb{S}^1$]\label{def:strict-interlacing-POPUC}
Let
\[
Z_n=\{\zeta_j\}_{j=1}^n \subset \mathbb{S}^1,\quad
Z_m=\{\xi_k\}_{k=1}^m \subset \mathbb{S}^1,
\]
be sets of pairwise distinct points with $Z_n\cap Z_m=\emptyset$. We say that $Z_m$ \emph{strictly interlaces} $Z_n$ if, for every $k=1,\dots,m$, the open
counterclockwise arc from $\zeta_k$ to $\zeta_{k+1}$ contains at most one point of $Z_m$, where we set
$\zeta_{m+1}=\zeta_1$.
\end{definition}

{  This paper is organized as follows. Section~\ref{sec:prelim} records the technical ingredients used throughout the paper and, in particular, formulates the real-line and unit-circle reconstruction problems in terms of quadrature representations with strictly positive weights. Section~\ref{sec:OPRL} addresses the real-line setting, establishing that strict interlacing is the sharp solvability criterion for the underdetermined two-spectra problem for Jacobi matrices and describing the corresponding solution set. Section~\ref{sec:POPUC} proves the analogous statement on the unit circle for paraorthogonal polynomials and unitary pentadiagonal matrices. Section~\ref{sec:algos} presents a reconstruction algorithm together with illustrative examples. Finally, Section~\ref{sec:remarks} offers concluding remarks and discusses extensions beyond the two-spectra regime.}

\section{Preliminary results}\label{sec:prelim}
{  We record a few technical ingredients that will be used repeatedly later on. As agreed in the
Introduction, we remain throughout in the positive definite setting and interpret orthogonality
via positive Borel measures. For background on OPRL and Gauss--Jacobi quadrature we refer to
\cite[Chapter~3]{CastilloPetronilho}, whereas for OPUC/POPUC and Szeg\H{o} quadrature see
\cite{S05I,S05II}.} In view of Section~\ref{sec:intro}, zeros of the relevant polynomials already admit a spectral
interpretation (Jacobi matrices on \(\mathbb R\) and the unitary pentadiagonal matrices
on \(\mathbb S^1\)). The main point here is a complementary interpretation: zeros as
nodes of quadrature formulas with strictly positive weights. This is the mechanism behind
Lemmas~\ref{lemma:OPRLlemma} and~\ref{lemma:POPUClemma}.

\medskip
\noindent\textbf{Gauss--Jacobi quadrature (on $\mathbb R$).}
Let \(\mu\) be a positive Borel measure on \(\mathbb R\) and let \(\{P_k(\cdot;\mu)\}_{k\ge 0}\) be its
monic OPRL. Thus, \(P_m(\cdot;\mu)\) satisfies
\[
\int P_m(x;\mu)\,x^k\,d\mu(x)=0,\quad k=0,1,\dots,m-1.
\]
If \(x_{n,1},\dots,x_{n,n}\) are the zeros of \(P_n(\cdot;\mu)\), then there exist weights
\(\omega_{n,1},\dots,\omega_{n,n}>0\) such that
\begin{equation}\label{eq:gauss-jacobi}
\int p(x)\,d\mu(x)=\sum_{j=1}^{n}\omega_{n,j}\,p(x_{n,j}),
\quad \deg p\le 2n-1,
\end{equation}
see \cite[Theorem~3.4]{CastilloPetronilho}. In particular, the nodes are precisely the zeros of
\(P_n\), and the weights are strictly positive.

\begin{lemma}\label{lemma:OPRLlemma}
Fix integers \(1\le m<n\) and two sets of distinct real numbers
\[
Z_n=\{x_k\}_{k=1}^{n}, \quad Z_m=\{y_k\}_{k=1}^{m}.
\]
Let
\[
P_n(x)=(x-x_1)\cdots(x-x_n),\quad P_m(x)=(x-y_1)\cdots(x-y_m).
\]
Then the following statements are equivalent:
\begin{enumerate}
\item[(i)] There exists a positive Borel measure \(\mu\) on \(\mathbb R\) whose monic OPRL satisfy
\(P_n(\cdot;\mu)=P_n\) and \(P_m(\cdot;\mu)=P_m\).
\item[(ii)] There exists a Jacobi matrix \(J_n\) \eqref{eq:jacobi-matrix} such that, if \(J_m\) is its
leading principal submatrix of order \(m\),
\[
\sigma(J_n)=Z_n,\quad \sigma(J_m)=Z_m.
\]
\item[(iii)] There exist weights \(\omega_1,\dots,\omega_n>0\) such that
\begin{equation}\label{eq:vandermonde}
\sum_{j=1}^{n}\omega_j\,P_m(x_j)\,x_j^k=0,\quad k=0,1,\dots,m-1.
\end{equation}
\end{enumerate}
\end{lemma}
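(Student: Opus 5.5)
I will prove the cycle (i)$\Rightarrow$(iii)$\Rightarrow$(i) and separately (i)$\Leftrightarrow$(ii). Throughout I use that the points of $Z_n$ are distinct.

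\textbf{(i)$\Leftrightarrow$(ii).} This rests on the correspondence already recalled in the Introduction. Suppose $\mu$ is as in (i), and let $\beta_k,\gamma_k$ be the coefficients of its three-term recurrence, with $\gamma_k>0$ for $1\le k\le n-1$. Build $J_n$ from them as in \eqref{eq:jacobi-matrix}. The cofactor expansion gives $\det(xI_k-J_k)=P_k(\cdot;\mu)$, so $\sigma(J_n)=Z_n$ and $\sigma(J_m)=Z_m$.

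Conversely, let $J_n$ be given with $\gamma_k>0$. Define $P_0,\dots,P_n$ by the recurrence, and extend it beyond degree $n$ with any $\beta_k\in\mathbb R$ and $\gamma_k>0$. Favard's theorem \cite[Theorem~2.9]{CastilloPetronilho} then yields a positive measure $\mu$ with $P_k(\cdot;\mu)=\det(xI_k-J_k)$. In particular $P_n(\cdot;\mu)=P_n$ and $P_m(\cdot;\mu)=P_m$, because monic polynomials with the same simple zeros coincide.

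\textbf{(i)$\Rightarrow$(iii).} Apply the Gauss--Jacobi formula \eqref{eq:gauss-jacobi}. Its nodes are the zeros of $P_n(\cdot;\mu)=P_n$, namely the $x_j$, and it comes with weights $\omega_j>0$. For $k\le m-1$ the polynomial $P_m(x)x^k$ has degree at most $2m-1\le 2n-1$, so the quadrature is exact on it. Orthogonality of $P_m(\cdot;\mu)=P_m$ to $x^k$ then gives exactly \eqref{eq:vandermonde}.

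\textbf{(iii)$\Rightarrow$(i).} Take the discrete measure $\mu=\sum_{j=1}^n\omega_j\delta_{x_j}$. It is positive and has $n$ support points, so $L^2(\mu)$ is $n$-dimensional and the bilinear form $\langle p,q\rangle=\int pq\,d\mu$ is positive definite on polynomials of degree $\le n-1$. Hence the monic orthogonal polynomials $P_0(\cdot;\mu),\dots,P_{n-1}(\cdot;\mu)$ exist and are unique.

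Next I identify $P_n(\cdot;\mu)$. By the standard convention for a measure with $n$ support points, $P_n(\cdot;\mu)$ is the monic degree-$n$ polynomial orthogonal to all lower degrees. Since $P_n$ vanishes on $\operatorname{supp}\mu$, it satisfies $\int P_n q\,d\mu=0$ for every $q$. Uniqueness follows because the difference of two such monic polynomials has degree $<n$ and is orthogonal to itself, so it vanishes on $\operatorname{supp}\mu$. A polynomial of degree $<n$ vanishing at $n$ points is zero, so $P_n(\cdot;\mu)=P_n$.

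For degree $m$, condition \eqref{eq:vandermonde} says precisely that $\int P_m x^k\,d\mu=0$ for $k<m$. Since $P_m$ is monic of degree $m<n$, uniqueness of monic orthogonal polynomials of degree $<n$ gives $P_m(\cdot;\mu)=P_m$.

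\textbf{Main obstacle.} The delicate point is the definition of $P_n(\cdot;\mu)$ when $\mu$ has exactly $n$ mass points. If the paper requires a measure with infinite support, I will instead invoke (ii) and Favard's extension. The discrete measure gives a recurrence with $\gamma_k>0$ for $k\le n-1$, and the recurrence at step $n$ produces $P_n$, since $P_n=xP_{n-1}-\beta_{n-1}P_{n-1}-\gamma_{n-1}P_{n-2}$ holds for the discrete measure. Continuing this recurrence with arbitrary $\gamma_k>0$ yields a measure with infinite support whose first $n+1$ monic orthogonal polynomials agree.
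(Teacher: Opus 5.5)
Your proof is correct and follows essentially the same route as the paper: Gauss--Jacobi quadrature for (i)$\Rightarrow$(iii), and for (iii)$\Rightarrow$(i) the discrete measure $\sum_j\omega_j\delta_{x_j}$ combined with uniqueness of monic orthogonal polynomials (using that $P_n$ vanishes on the support and that (iii) is exactly orthogonality of $P_m$ to $1,\dots,x^{m-1}$). The only difference is that the paper passes to an arbitrary positive-definite measure whose moments up to order $2n-1$ agree with those of the discrete measure, and it takes the existence of such a measure for granted; your Favard continuation of the recurrence supplies exactly such a measure, and you also spell out the (i)$\Leftrightarrow$(ii) correspondence that the paper only cites.
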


\begin{proof}
The equivalence (i)\(\Leftrightarrow\)(ii) is the standard OPRL/Jacobi correspondence recalled in
Section~\ref{sec:intro}; see also \cite[Remark~3.4]{CastilloPetronilho}.

(i)\(\Rightarrow\)(iii).
Assume (i). Apply Gauss--Jacobi quadrature \eqref{eq:gauss-jacobi} with \(n\) to the zeros
\(x_1,\dots,x_n\) of \(P_n\), obtaining weights \(\omega_1,\dots,\omega_n>0\). Since
\(P_m(\cdot;\mu)=P_m\), we have \(\int P_m(x)x^k\,d\mu(x)=0\) for \(k=0,\dots,m-1\). As
\(\deg(P_m x^k)\le 2m-1\le 2n-1\), \eqref{eq:gauss-jacobi} yields \eqref{eq:vandermonde}.

(iii)\(\Rightarrow\)(i).
Assume (iii). Consider an arbitrary positive definite Borel measure $\mu$ such that its first $2n-1$ moments coincide with those given by \(\mu_0=\sum_{j=1}^{n}\omega_j\,\delta_{x_j}\), where \(\delta_{x_0}\) denotes the
point mass at \(x_0\). Since \(\omega_j>0\), the Gram--Schmidt process applied to
\(1,x,\dots,x^n\) with respect to $\mu$ produces a finite monic orthogonal family
\(\{Q_k(\cdot;\mu)\}_{k=0}^{n}\). By construction, \(Q_n\) is monic and must coincide with $P_n$. Moreover, \eqref{eq:vandermonde} says \(P_m\) is orthogonal to
\(1,x,\dots,x^{m-1}\) with respect to $\mu$, so \(Q_m=P_m\) by uniqueness of the monic orthogonal polynomial
of degree \(m\).
\end{proof}

\noindent
Lemma~\ref{lemma:OPRLlemma} is the basic device used in Section~\ref{sec:OPRL} for solving
Problem~\ref{prob:underdetermined-oprl}: under strict interlacing, we characterize the solution set of
\eqref{eq:vandermonde} under \(\omega_j>0\), and hence all measures \(\mu\) (up to moments of order
\(2n-1\)) and all Jacobi matrices \(J_n\) realizing the prescribed data.

\medskip
\noindent\textbf{Szeg\H{o} quadrature (on $\mathbb S^1$).}
Let \(\nu\) be a positive Borel measure on \(\mathbb S^1\), and let
\(\{\Phi_k(\cdot;\nu)\}_{k\ge 0}\) be the associated monic OPUC with Verblunsky coefficients
\(\alpha_k\in\mathbb D\). The orthogonality takes the form
\[
\int \Phi_m(z;\nu)\,z^{-k}\,d\nu(z)=0,\quad k=0,1,\dots,m-1.
\]
Fix \(n\ge 1\) and \(b_n\in\mathbb S^1\), and define the POPUC \(\Psi_n(\cdot;\nu,b_n)\) via
\eqref{eq:popuc}. Its zeros are simple and lie on \(\mathbb S^1\), and they serve as nodes of
Szeg\H{o} quadrature: if \(\mathrm{zeros}(\Psi_n)=\{\zeta_{n,j}\}_{j=1}^n\), then there exist unique
weights \(\omega_{n,1},\dots,\omega_{n,n}>0\) such that
\begin{equation}\label{eq:szego-quad}
\int p(z)\,d\nu(z)=\sum_{j=1}^{n}\omega_{n,j}\,p(\zeta_{n,j}),
\quad
p\in\mathrm{span}\{z^{-(n-1)},\dots,z^{n-1}\},
\end{equation}
see, for example, \cite{Gragg1993}.

\begin{lemma}\label{lemma:POPUClemma}
Fix integers \(1\le m<n\) and two sets
\[
Z_n=\{\zeta_j\}_{j=1}^{n}\subset\mathbb S^1,\quad
Z_m=\{\xi_k\}_{k=1}^{m}\subset\mathbb S^1,
\]
with \(Z_n\cap Z_m=\emptyset\). Define the monic polynomials
\[
\Psi_n(z)=(z-\zeta_1)\cdots(z-\zeta_n),\quad
\Psi_m(z)=(z-\xi_1)\cdots(z-\xi_m),
\]
and set \(b_n=-\overline{\Psi_n(0)}\) and \(b_m=-\overline{\Psi_m(0)}\).
Then the following statements are equivalent:
\begin{enumerate}
\item[(i)] There exists a positive Borel measure \(\nu\) on \(\mathbb S^1\) such that
\[
\Psi_n(z)=\Psi_n(z;\nu,b_n),\quad
\Psi_m(z)=\Psi_m(z;\nu,b_m).
\]
\item[(ii)] There exist Verblunsky coefficients \(\alpha_0,\dots,\alpha_{n-2}\in\mathbb D\) such that,
with the unitary pentadiagonal matrices \(\mathcal C\) introduced in Section~\ref{sec:intro},
\[
\sigma\bigl(\mathcal{C}(\alpha_0,\dots,\alpha_{n-2},b_n)\bigr)=Z_n,\quad
\sigma\bigl(\mathcal{C}(\alpha_0,\dots,\alpha_{m-2},b_m)\bigr)=Z_m.
\]
\item[(iii)] There exist weights \(\omega_1,\dots,\omega_n>0\) such that
\begin{equation}\label{eq:vandermondePOPUC}
\sum_{j=1}^{n}\omega_j\,\Psi_m(\zeta_j)\,\zeta_j^{-k}=0,\quad k=1,\dots,m-1.
\end{equation}
\end{enumerate}
\end{lemma}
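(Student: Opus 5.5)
I plan to mirror the proof of Lemma~\ref{lemma:OPRLlemma}, replacing Gauss--Jacobi quadrature by the Szeg\H{o} quadrature \eqref{eq:szego-quad}. The equivalence (i)\(\Leftrightarrow\)(ii) will be treated as the standard correspondence recalled in Section~\ref{sec:intro}. Given \(\nu\), its Verblunsky coefficients \(\alpha_0,\dots,\alpha_{n-2}\) together with \(b_n\) determine \(\mathcal C(\alpha_0,\dots,\alpha_{n-2},b_n)\), whose characteristic polynomial is, up to a unimodular constant, \(\Psi_n(\cdot;\nu,b_n)\). Since both polynomials are monic, they coincide, and similarly for degree \(m\).

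For the converse, given the \(\alpha_k\in\mathbb D\), there is a measure \(\nu\) with these coefficients (Verblunsky's theorem, after extending the sequence arbitrarily in \(\mathbb D\)). The normalization \(b_n=-\overline{\Psi_n(0)}\) is consistent with \eqref{eq:popuc}, because \(\Phi_{n-1}^*(0)=1\) gives \(\Psi_n(0)=-\overline{b_n}\). I will also note that \(|\Psi_n(0)|=\prod|\zeta_j|=1\), so \(b_n,b_m\in\mathbb S^1\).

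For (i)\(\Rightarrow\)(iii), I apply \eqref{eq:szego-quad} at level \(n\) with nodes \(\zeta_j\), obtaining weights \(\omega_j>0\). The key observation is that \(\Psi_m(\cdot;\nu,b_m)=z\Phi_{m-1}-\overline{b_m}\Phi_{m-1}^*\) is orthogonal to \(z^k\) for \(k=1,\dots,m-1\). Indeed, \(z\Phi_{m-1}\perp z^1,\dots,z^{m-1}\) because \(\Phi_{m-1}\perp 1,\dots,z^{m-2}\). Also \(\Phi_{m-1}^*\perp z^1,\dots,z^{m-1}\), which is the standard orthogonality of reversed polynomials. Hence \(\int \Psi_m(z)z^{-k}\,d\nu=0\) for \(1\le k\le m-1\). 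The integrand lies in \(\mathrm{span}\{z^{-(m-1)},\dots,z^{m-1}\}\subset\mathrm{span}\{z^{-(n-1)},\dots,z^{n-1}\}\), so the quadrature applies and yields \eqref{eq:vandermondePOPUC}.

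For (iii)\(\Rightarrow\)(i), I take \(\nu=\sum_j\omega_j\delta_{\zeta_j}\). This is a positive measure with \(n\) support points, so its monic OPUC \(\Phi_0,\dots,\Phi_{n-1}\) exist, with \(\alpha_0,\dots,\alpha_{n-2}\in\mathbb D\). The remaining work is two identifications, and they are the main obstacle.

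\emph{Degree \(n\).} I need \(\Psi_n(\cdot;\nu,b_n)=\Psi_n\). The polynomial \(\Psi_n\) vanishes on \(\mathrm{supp}\,\nu\), hence is zero in \(L^2(\nu)\) and orthogonal to everything. The space of polynomials of degree \(\le n\) that are orthogonal to \(z,\dots,z^{n-1}\) is spanned by \(z\Phi_{n-1}\) and \(\Phi_{n-1}^*\). This follows from a dimension count using the Gram matrix of \(1,\dots,z^{n-1}\), which is positive definite. Hence \(\Psi_n=c_1z\Phi_{n-1}+c_2\Phi_{n-1}^*\). Monicity gives \(c_1=1\), and evaluating at \(0\) gives \(c_2=-\overline{b_n}\) by the choice of \(b_n\).

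\emph{Degree \(m\).} Condition \eqref{eq:vandermondePOPUC} says exactly that \(\Psi_m\perp z^1,\dots,z^{m-1}\) in \(L^2(\nu)\). Since \(m\le n-1\), the Gram matrix on \(1,\dots,z^{m-1}\) is nondegenerate. The same two-dimensional argument then gives \(\Psi_m=z\Phi_{m-1}-\overline{b_m}\Phi_{m-1}^*\), with the coefficient again fixed by monicity and the value at \(0\).

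The delicate point is the two-dimensionality claim. I would prove it by writing a polynomial \(q\) of degree \(\le m\) as \(q=az\Phi_{m-1}+r\) with \(\deg r\le m-1\). The orthogonality conditions then force \(r\) to be orthogonal to \(z,\dots,z^{m-1}\), and among polynomials of degree \(\le m-1\) those orthogonal to \(z,\dots,z^{m-1}\) form the one-dimensional space spanned by \(\Phi_{m-1}^*\), by nondegeneracy of the Gram matrix on \(1,\dots,z^{m-1}\).
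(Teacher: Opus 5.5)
Your proof is correct and follows essentially the same route as the paper. Szeg\H{o} quadrature at level $n$ gives (i)$\Rightarrow$(iii); for (iii)$\Rightarrow$(i) you use the discrete measure $\sum_j\omega_j\delta_{\zeta_j}$, the two-dimensionality of the degree-$\ell$ paraorthogonal space, and normalization by monicity and the value at $0$. Your version is slightly tighter in two places: you take $\nu$ to be $\nu_0$ itself, whereas the paper speaks of an ``arbitrary'' measure with matching moments and then treats it as supported on $Z_n$, and you prove the two-dimensionality claim via the nondegenerate Gram matrix rather than citing it.
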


\begin{proof}
The equivalence (i)\(\Leftrightarrow\)(ii) is the standard unitary
pentadiagonal/POPUC correspondence recalled in
Section~\ref{sec:intro}; see \cite{CanteroMoralVelazquez2003,S05I,S07b}.

(i)\(\Rightarrow\)(iii).
Assume (i). Apply Szeg\H{o} quadrature \eqref{eq:szego-quad} with \(n\) to \(\Psi_n(\cdot;\nu,b_n)\),
whose zeros are \(\zeta_1,\dots,\zeta_n\), obtaining weights \(\omega_1,\dots,\omega_n>0\). Since
\(\Psi_m=\Psi_m(\cdot;\nu,b_m)\), the standard paraorthogonality relations give
\[
\int \Psi_m(z)\,z^{-k}\,d\nu(z)=0,\quad k=1,\dots,m-1,
\]
and each Laurent polynomial \(\Psi_m(z)z^{-k}\) lies in
\(\mathrm{span}\{z^{-(n-1)},\dots,z^{n-1}\}\) because \(m<n\). Therefore, \eqref{eq:szego-quad} yields
\eqref{eq:vandermondePOPUC}.

(iii)\(\Rightarrow\)(i).
Assume (iii). Consider an arbitrary positive definite measure $\nu$ on the unit circle $\mathbb{S}^1$ such that its moments from $k=-(n-1),\dots,n-1$ coincide with those defined by the discrete measure
\[
\nu_0=\sum_{j=1}^{n}\omega_j\,\delta_{\zeta_j}.
\]
Since \(\omega_j>0\), Gram--Schmidt applied to \(1,z,\dots,z^{n-1}\) with respect to $\nu$ produces the
monic OPUC \(\{\Phi_k(\cdot;\nu)\}_{k=0}^{n-1}\) and their reversals \(\{\Phi_k^*(\cdot;\nu)\}\). Because \(\nu\) is supported on \(Z_n=\{\zeta_j\}_{j=1}^n\), the monic polynomial
\(\Psi_n(z)\) is paraorthogonal of degree \(n\) with respect to \(\nu\)
(it vanishes on the support, hence is orthogonal to all \(z^{-k}\) for \(k=1,\dots,n-1\)). Likewise,
(iii) is precisely the statement that \(\Psi_m(z)\) is paraorthogonal of
degree \(m\) with respect to the same \(\nu\).

Fix \(\ell\in\{m,n\}\). The degree-\(\ell\) paraorthogonal polynomials for \(\nu\) form a
two-dimensional vector space, spanned by \(z\,\Phi_{\ell-1}(z;\nu)\) and \(\Phi_{\ell-1}^*(z;\nu)\);
see \cite[Chapter~2]{S05I}. Therefore \(\Psi_\ell\) must be a linear combination of these two
polynomials. Since \(\Psi_\ell\) is monic, the coefficient of \(z\,\Phi_{\ell-1}\) must be \(1\).
Evaluating at \(z=0\), the term \(z\,\Phi_{\ell-1}(z;\nu)\) vanishes, while
\(\Phi_{\ell-1}^*(0;\nu)\neq 0\); hence the remaining coefficient is uniquely determined by the value
\(\Psi_\ell(0)\). With the choice \(b_\ell=-\overline{\Psi_\ell(0)}\), this gives
\[
\Psi_\ell(z)=z\,\Phi_{\ell-1}(z;\nu)-\overline{b_\ell}\,\Phi_{\ell-1}^*(z;\nu),
\quad \ell\in\{m,n\},
\]
i.e.,
\[
\Psi_n(z)=\Psi_n(z;\nu,b_n),\quad \Psi_m(z)=\Psi_m(z;\nu,b_m).
\]
This gives (i) and completes the proof.
\end{proof}

\noindent
Lemma~\ref{lemma:POPUClemma} reduces the unit-circle problem to the analysis of the solution set of
\eqref{eq:vandermondePOPUC} under the constraint \(\omega_j>0\). Under strict interlacing on
\(\mathbb S^1\), Section~\ref{sec:POPUC} solves Problem \ref{prob:underdetermined-POPUC} and describes the
corresponding family of unitary pentadiagonal matrices \(\mathcal{C}\) realizing the prescribed
spectra.

\section{Main results on $\mathbb R$}\label{sec:OPRL}
We treat first the real-line problem. Fix integers $1\le m<n$ and two sets of ordered and real distinct points
\[
Z_n=\{x_j\}_{j=1}^n,\quad Z_m=\{y_k\}_{k=1}^m.
\]
As explained in Section~\ref{sec:prelim}, solvability is equivalent to the
existence of weights $\omega_1,\dots,\omega_n>0$ satisfying the Vandermonde-type system
\eqref{eq:vandermonde}. Accordingly, we introduce the set of strictly positive solutions
\[
\mathcal{W}_{>0}(Z_n,Z_m)
=
\Bigl\{ \boldsymbol{\omega}=(\omega_1,\dots,\omega_n)\in\mathbb{R}^n:
\mathbf{A}\boldsymbol{\omega}=\mathbf{0}\ \text{and}\ \omega_j>0\ \text{for all }j \Bigr\},
\]
where $\mathbf{A}$ is the coefficient matrix in \eqref{eq:vandermonde}. The goal of this section is to
show that $\mathcal{W}_{>0}(Z_n,Z_m)\neq\emptyset$ precisely under strict interlacing, and to describe
all such $\boldsymbol{\omega}$ explicitly. The first step is to make the Vandermonde structure of the
system completely transparent.

\begin{lemma}\label{lem:vandermonde-reduction}
Assume the hypotheses and notation of Lemma~\ref{lemma:OPRLlemma}. Consider the $m\times n$ matrix $\mathbf{A}=(A_{k,j})_{1\le k\le m,\ 1\le j\le n}$ with entries
\[
A_{k,j}=x_j^{k-1}\,P_m(x_j),\quad 1\le k\le m,\ 1\le j\le n.
\]
Then $\mathbf{A}$ factorizes as
\[
\mathbf{A}=\mathbf{V}\mathbf{D},
\]
where $\mathbf{V}$ is the $m\times n$ Vandermonde matrix
\[
\mathbf{V}=
\begin{bmatrix}
\;1 & 1 & \cdots & 1\;\\[3pt]
\;x_1 & x_2 & \cdots & x_n\;\\[3pt]
\:\vdots & \vdots & & \vdots\;\\[3pt]
\;x_1^{m-1} & x_2^{m-1} & \cdots & x_n^{m-1}\;
\end{bmatrix},
\]
and $\mathbf{D}=\mathrm{diag}(P_m(x_1),\dots,P_m(x_n))$. In particular, $P_m(x_j)\neq 0$ for all $j$,
so $\mathbf{D}$ is invertible, and therefore
\[
\operatorname{rank}\mathbf{A}=m,\quad \dim\ker\mathbf{A}=n-m.
\]
Moreover, \eqref{eq:vandermonde} is exactly the matrix equation $\mathbf{A}\boldsymbol{\omega}=\mathbf{0}$.
\end{lemma}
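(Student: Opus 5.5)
The plan is to verify the factorization entrywise, deduce the rank from the Vandermonde structure, and read off the kernel dimension by rank--nullity. For the factorization, compute the $(k,j)$ entry of $\mathbf{V}\mathbf{D}$. Since $\mathbf{D}$ is diagonal, right multiplication by $\mathbf{D}$ scales the $j$-th column of $\mathbf{V}$ by $P_m(x_j)$. Hence
\[
(\mathbf{V}\mathbf{D})_{k,j}=x_j^{k-1}P_m(x_j)=A_{k,j},\qquad 1\le k\le m,\ 1\le j\le n.
\]
Likewise, the $k$-th row of $\mathbf{A}\boldsymbol{\omega}=\mathbf{0}$ reads $\sum_j\omega_j P_m(x_j)x_j^{k-1}=0$. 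As $k-1$ runs over $0,\dots,m-1$, this is exactly \eqref{eq:vandermonde}, which gives the last assertion.

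Next comes the invertibility of $\mathbf{D}$. Here I would make explicit the standing assumption $Z_n\cap Z_m=\emptyset$. It is automatic in the intended setting: under strict interlacing (Definition~\ref{def:strict-interlacing-oprl}) every $y_k$ lies in an open interval $(x_{i_k},x_{i_k+1})$, so $y_k\neq x_j$ for all $j$. The same holds in the necessary direction, where it follows from strict interlacing of eigenvalues of $J_m$ and $J_n$. Then $P_m(x_j)=\prod_k(x_j-y_k)\neq 0$ for every $j$, so $\mathbf{D}$ is invertible. This is the only genuinely delicate point. If the $x_j$ were allowed to meet $Z_m$, some columns of $\mathbf{A}$ would vanish, although the rank could still equal $m$ as long as at least $m$ of the $x_j$ avoid $Z_m$. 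So I would state the disjointness hypothesis clearly rather than hide it.

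Finally, for the rank: $\mathbf{D}$ is invertible, so $\operatorname{rank}\mathbf{A}=\operatorname{rank}\mathbf{V}$. The matrix $\mathbf{V}$ has $m<n$ rows, and its first $m$ columns form the square Vandermonde matrix on $x_1,\dots,x_m$. That matrix has determinant $\prod_{1\le i<l\le m}(x_l-x_i)\neq0$ because the $x_j$ are distinct. Thus $\mathbf{V}$ has full row rank $m$, and so does $\mathbf{A}$. Rank--nullity for $\mathbf{A}:\mathbb{R}^n\to\mathbb{R}^m$ then gives $\dim\ker\mathbf{A}=n-m$.
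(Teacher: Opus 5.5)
Your proof is correct and follows the paper's argument: the entrywise check of $\mathbf{A}=\mathbf{V}\mathbf{D}$, invertibility of $\mathbf{D}$ from $Z_n\cap Z_m=\emptyset$ (which the paper's proof also invokes directly rather than deriving it), full row rank of $\mathbf{V}$ from a nonvanishing $m\times m$ Vandermonde minor, and then rank--nullity. One caveat on your side remark: for $m<n-1$, disjointness does \emph{not} follow from eigenvalue interlacing of $J_m$ and $J_n$ — for instance, with all $\beta_k=0$, $n=3$ and $m=1$ one gets $P_1(x)=x$ and $P_3(x)=x(x^2-\gamma_1-\gamma_2)$, so $0\in\sigma(J_1)\cap\sigma(J_3)$ — and therefore it must be imposed as a standing hypothesis on the data rather than derived.
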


\begin{proof}
The factorization $\mathbf{A}=\mathbf{V}\mathbf{D}$ is immediate from the definitions:
$(\mathbf{V}\mathbf{D})_{k,j}=x_j^{k-1}P_m(x_j)=A_{k,j}$.

Since $Z_n\cap Z_m=\emptyset$, we have $P_m(x_j)\neq 0$ for every $j$, hence $\mathbf{D}$ is invertible.
Because the nodes $x_1,\dots,x_n$ are pairwise distinct, the Vandermonde determinant formula implies
that any $m\times m$ square Vandermonde submatrix obtained by selecting $m$ columns of $\mathbf{V}$
has nonzero determinant, and thus $\mathbf{V}$ has full row rank $m$. Consequently,
\[
\operatorname{rank}\mathbf{A}=\operatorname{rank}(\mathbf{V}\mathbf{D})=\operatorname{rank}\mathbf{V}=m,
\]
and since $\mathbf{A}$ is $m\times n$, it follows that $\dim\ker\mathbf{A}=n-m$.
Finally, the relations in \eqref{eq:vandermonde} are precisely the coordinates of
$\mathbf{A}\boldsymbol{\omega}=\mathbf{0}$.
\end{proof}

We next describe the full solution space of \eqref{eq:vandermonde} with no sign restrictions.
This is elementary and included only to fix notation for the positivity analysis later on.
Since \(\mathbf A=\mathbf V\mathbf D\) (Lemma~\ref{lem:vandermonde-reduction}) with \(\mathbf D\) invertible,
the problem reduces to the nullspace of the Vandermonde matrix \(\mathbf V\).
The explicit sparse generators below are the standard barycentric weights from Lagrange interpolation,
see \cite[Ch.~2, \S2.1]{StoerBulirsch}. The spanning statement is the usual fact that every linear
dependence is a linear combination of minimal ones (circuits); see \cite[Ch.~1, \S1.1]{OxleyMatroid}.
(Our indexing differs from \cite{StoerBulirsch}, but the formulas translate directly.)

\begin{lemma}\label{lem:circuit-solutions}
Assume the hypotheses and notation of Lemma~\ref{lem:vandermonde-reduction}.
For any subset \(\mathcal{J}\subset\{1,\dots,n\}\) with \(|\mathcal{J}|=m+1\), write
\(\mathcal{J}=\{j_0,\dots,j_m\}\) and set
\[
Q_{\mathcal{J}}(x)=(x-x_{j_0})\cdots(x-x_{j_m}).
\]
Define \(\boldsymbol{\omega}^{(\mathcal{J})}\in\mathbb R^n\) by
\begin{equation}
\label{eq:circuit-solution}
\omega^{(\mathcal{J})}_j=
\begin{cases}
\dfrac{1}{P_m(x_j)\,Q_{\mathcal{J}}'(x_j)}, & j\in \mathcal{J},\\[7pt]
0, & j\notin \mathcal{J}.
\end{cases} 
\end{equation}
Then \(\boldsymbol{\omega}^{(\mathcal{J})}\neq\mathbf 0\) and \(\mathbf A\,\boldsymbol{\omega}^{(\mathcal{J})}=\mathbf 0\).
Moreover,
\[
\ker \mathbf A=\mathrm{span}\bigl\{\boldsymbol{\omega}^{(\mathcal{J})}:\ \mathcal{J}\subset\{1,\dots,n\},\ |\mathcal{J}|=m+1\bigr\}.
\]
\end{lemma}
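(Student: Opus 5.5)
Since $\mathbf A=\mathbf V\mathbf D$ with $\mathbf D$ invertible (Lemma~\ref{lem:vandermonde-reduction}), I will first reduce everything to the Vandermonde matrix $\mathbf V$. A vector $\boldsymbol\omega$ lies in $\ker\mathbf A$ if and only if $\mathbf v=\mathbf D\boldsymbol\omega$ lies in $\ker\mathbf V$. For $\boldsymbol\omega^{(\mathcal J)}$ the corresponding $\mathbf v$ has entries $v_j=1/Q_{\mathcal J}'(x_j)$ for $j\in\mathcal J$ and $0$ otherwise. So the first claim is equivalent to the classical identity
\[
\sum_{j\in\mathcal J}\frac{x_j^{k}}{Q_{\mathcal J}'(x_j)}=0,\quad k=0,\dots,m-1.
\]
I will prove this by Lagrange interpolation at the $m+1$ nodes $\{x_j\}_{j\in\mathcal J}$. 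The polynomial $x^k$, with $k\le m-1<m$, equals its interpolant $\sum_{j\in\mathcal J}x_j^k\,Q_{\mathcal J}(x)/\bigl((x-x_j)Q_{\mathcal J}'(x_j)\bigr)$. Comparing the coefficients of $x^m$ then gives the identity. Alternatively, one can use partial fractions of $x^k/Q_{\mathcal J}(x)$ and the fact that the residue at infinity vanishes. Nonvanishing is immediate, since the nodes are distinct and $P_m(x_j)\neq0$, so every entry indexed by $\mathcal J$ is finite and nonzero.

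For the spanning statement, I will use a dimension count. By Lemma~\ref{lem:vandermonde-reduction}, $\dim\ker\mathbf A=n-m$, so it suffices to exhibit $n-m$ linearly independent vectors among the $\boldsymbol\omega^{(\mathcal J)}$. I will take $\mathcal J_r=\{1,\dots,m\}\cup\{m+r\}$ for $r=1,\dots,n-m$. The vector $\boldsymbol\omega^{(\mathcal J_r)}$ has a nonzero entry at position $m+r$ and zeros at every other position $m+s$ with $s\neq r$. Restricting to the coordinates $m+1,\dots,n$ therefore gives a diagonal matrix with nonzero diagonal, which proves linear independence. Hence these $n-m$ vectors span $\ker\mathbf A$, and a fortiori so does the full family.

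No step is a genuine obstacle. The only point needing care is the interpolation identity, specifically that the degree bound $k\le m-1$ strictly below $m=|\mathcal J|-1$ makes the leading coefficient vanish. I would write that out explicitly.
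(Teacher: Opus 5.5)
Your proof is correct. The reduction to $\ker\mathbf V$ via $\mathbf D$ is the same as in the paper. Your Lagrange-interpolation derivation of $\sum_{j\in\mathcal J}x_j^k/Q_{\mathcal J}'(x_j)=0$ for $k\le m-1$ is a worked-out version of the barycentric-weight identity that the paper simply cites from Stoer--Bulirsch.

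You genuinely differ from the paper on the spanning claim. The paper invokes the general matroid principle that every linear dependence is a combination of circuits (citing Oxley). You instead use the dimension count $\dim\ker\mathbf A=n-m$ from Lemma~\ref{lem:vandermonde-reduction}. You then exhibit the fundamental circuits $\mathcal J_r=\{1,\dots,m\}\cup\{m+r\}$, whose restriction to coordinates $m+1,\dots,n$ is diagonal with nonzero diagonal.

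Your route is self-contained and gives slightly more: an explicit basis of $\ker\mathbf A$ rather than merely a spanning set. It is in fact the standard proof of the circuit-decomposition principle, specialised to this uniform matroid.

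The paper's matroid framing emphasises that the vectors $\boldsymbol\omega^{(\mathcal J)}$ are exactly the minimal-support elements of $\ker\mathbf A$, and Lemma~\ref{lem:positive-cone} later relies on this. Your setup yields it immediately, and it would be worth adding a sentence to say so. Any $m$ columns of $\mathbf V$ are independent, so every nonzero kernel vector has support of size at least $m+1$. The $m\times(m+1)$ submatrix of $\mathbf V$ on the columns in $\mathcal J$ has a one-dimensional kernel, so every kernel vector supported in $\mathcal J$ is a multiple of $\mathbf D\,\boldsymbol\omega^{(\mathcal J)}$.
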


\begin{proof}
Fix \(\mathcal{J}\) and define \[\boldsymbol{\lambda}^{(\mathcal{J})}=\mathbf D\,\boldsymbol{\omega}^{(\mathcal{J})}.\]
By construction, \(\boldsymbol{\lambda}^{(\mathcal{J})}\) is supported on \(\mathcal{J}\) and, in addition, its nonzero entries are
\(\lambda^{(\mathcal{J})}_j=1/Q_{\mathcal{J}}'(x_j)\) for \(j\in\mathcal{J}\).
These are exactly the barycentric weights associated with the nodes \(\{x_j\}_{j\in\mathcal{J}}\);
in particular, they satisfy the Vandermonde moment identities
\[
\sum_{j\in\mathcal{J}}\frac{x_j^{k}}{Q_{\mathcal{J}}'(x_j)}=0,\quad k=0,1,\dots,m-1,
\]
see \cite[Ch.~2, \S2.1]{StoerBulirsch}. This is precisely \(\mathbf V\,\boldsymbol{\lambda}^{(\mathcal{J})}=\mathbf 0\),
and therefore
\[
\mathbf A\,\boldsymbol{\omega}^{(\mathcal{J})}
=\mathbf V\mathbf D\,\boldsymbol{\omega}^{(\mathcal{J})}
=\mathbf V\,\boldsymbol{\lambda}^{(\mathcal{J})}
=\mathbf 0.
\]
Since the entries on \(\mathcal{J}\) are nonzero, also \(\boldsymbol{\omega}^{(\mathcal{J})}\neq\mathbf 0\).

For the spanning claim, \(\mathbf D\) is invertible, so \(\ker\mathbf A=\mathbf D^{-1}\ker\mathbf V\).
Thus it suffices to note that \(\ker\mathbf V\) is spanned by its minimal-support dependencies.
In matroid language, the column dependencies of \(\mathbf V\) define a matroid, and its circuits are
the minimal dependent sets; see \cite[Ch.~1, \S1.1]{OxleyMatroid}. Here every \(\mathcal{J}\) with
\(|\mathcal{J}|=m+1\) supports such a circuit vector \(\boldsymbol{\lambda}^{(\mathcal{J})}\), and the standard
circuit-decomposition argument shows that any \(\boldsymbol{\lambda}\in\ker\mathbf V\) is a linear combination of
these circuit vectors. Applying \(\mathbf D^{-1}\) yields the stated spanning set for \(\ker\mathbf A\).
\end{proof}

We now use strict interlacing to describe the nonnegative and strictly positive solutions of
\eqref{eq:vandermonde}. By Lemma~\ref{lem:circuit-solutions}, \(\ker\mathbf A\) is spanned by sparse
circuit vectors indexed by subsets \(\mathcal J\subset\{1,\dots,n\}\) with \(|\mathcal J|=m+1\).
Thus, the positivity analysis reduces to identifying which circuits can be scaled to be entrywise
nonnegative. Under strict interlacing, this becomes transparent after grouping the indices
\(\{1,\dots,n\}\) into the interlacing bands determined by the relative position of \(Z_m\) among
\(Z_n\). Indeed, assume that $Z_m$ strictly interlaces $Z_n$ and let
\[
i_0=0<i_1<\cdots<i_m<n=i_{m+1}
\]
be the interlacing indices, so that $x_{i_k}<y_k<x_{i_k+1}$ for $k=1,\dots,m$. These indices induce
a partition of $\{1,\dots,n\}$ into $m+1$ consecutive \emph{bands} of indices,
\[
I_r=\{i_r+1,\dots,i_{r+1}\},\quad r=0,\dots,m,
\]
so that $\{1,\dots,n\}=\bigsqcup_{r=0}^m I_r$. Since $P_m$ is monic with simple real zeros, its sign is constant on each open interval between consecutive zeros, and hence
$P_m(x_j)$ has a constant sign for all $j$ within a fixed band $I_r$ since the zeros $y_k$ lie between
bands. In particular, the sign alternates from band to band as one crosses a zero of $P_m$.

\begin{eje}\label{ex:bands-signs}
Let $n=7$ and $m=3$, and consider
\[
Z_n=\{x_1,\dots,x_7\}=\{0,1,2,3,4,5,6\},\quad
Z_m=\{y_1,y_2,y_3\}=\Bigl\{\frac12,\frac52,\frac92\Bigr\}.
\]
Then $Z_m$ strictly interlaces $Z_n$ with interlacing indices
\[
i_0=0<i_1=1<i_2=3<i_3=5<i_4=7,
\]
since $y_1 \in (x_1,x_2)$, $y_2 \in (x_3,x_4)$ and $y_3 \in (x_5,x_6)$. Then, the associated bands are
\[
I_0=\{1\},\quad I_1=\{2,3\},\quad I_2=\{4,5\},\quad I_3=\{6,7\},
\]
and they partition $\{1,\dots,7\}$. Moreover,
\[
P_3(x)=(x-y_1)(x-y_2)(x-y_3)=\Bigl(x-\frac12\Bigr)\Bigl(x-\frac52\Bigr)\Bigl(x-\frac92\Bigr).
\]
Thus, $P_3(x_j)$ has constant sign on each band: it is negative on $I_0$, positive on $I_1$,
negative on $I_2$, and positive on $I_3$.
\end{eje} 
We leverage this interlacing structure to characterize the strictly positive solutions of the Vandermonde system.

\begin{lemma}\label{lem:positive-cone}
Assume the hypotheses and notation of Lemma~\ref{lem:circuit-solutions}. Assume, in addition, that
$Z_m$ strictly interlaces $Z_n$ in the sense of Definition~\ref{def:strict-interlacing-oprl}, and let
\[
i_0=0<i_1<\cdots<i_m<n=i_{m+1}
\]
be the associated indices of interlacing. Set $I_r=\{i_r+1,\dots,i_{r+1}\}$ for $r=0,\dots,m$ so that $\{1,\dots,n\}=\bigsqcup_{r=0}^m I_r$\footnote{We write $\sqcup$ for disjoint union.}. Define the family
\[
\mathfrak{J}_{+}
=
\Bigl\{\mathcal{J}\subset\{1,\dots,n\}:\ |\mathcal{J}|=m+1,\ |\mathcal{J}\cap I_r|=1
\ \text{for all }r=0,\dots,m\Bigr\}.
\]
For each $\mathcal{J}\in\mathfrak{J}_{+}$, let $\boldsymbol{\omega}_{\ge0}=\boldsymbol{\omega}^{(\mathcal{J})}$ be the
vector given by \eqref{eq:circuit-solution}. Then, $\boldsymbol{\omega}_{\ge0}\ge 0$. Moreover, the following hold:
\begin{enumerate}
\item[(a)] Define
$\mathcal{W}_{\ge 0}(Z_n,Z_m)=\{\boldsymbol{\omega}\in\mathbb{R}^n:\mathbf{A}\boldsymbol{\omega}=\mathbf{0}
\ \text{and}\ \omega_j\ge 0\ \text{for all }j\}$.
Then
\[
\mathcal{W}_{\ge 0}(Z_n,Z_m)
=
\operatorname{cone}\bigl\{\boldsymbol{\omega}^{(\mathcal{J})}:\ \mathcal{J}\in\mathfrak{J}_{+}\bigr\}.
\]

\item[(b)] One has
\[
\mathcal{W}_{>0}(Z_n,Z_m)=\mathcal{W}_{\ge0}(Z_n,Z_m)\cap \mathbb{R}^n_{>0}
=\operatorname{relint}\bigl(\mathcal{W}_{\ge 0}(Z_n,Z_m)\bigr),
\]
where $\operatorname{relint}A$ denotes the relative interior of $A$, taken (for our purposes) in $\ker\mathbf{A}$.
Furthermore, $\boldsymbol{\omega}\in\mathcal{W}_{>0}(Z_n,Z_m)$ if and only if
\[
\boldsymbol{\omega}=\sum_{\mathcal{J}\in\mathfrak{J}_{+}} t_{\mathcal{J}}\,
\boldsymbol{\omega}^{(\mathcal{J})},
\]
and, for every $j\in\{1,\dots,n\}$, there exists at least one $\mathcal{J}\in\mathfrak{J}_{+}$ such that
$j\in\mathcal{J}$ and $t_{\mathcal{J}}>0$. In particular, $\mathcal{W}_{>0}(Z_n,Z_m)\neq\emptyset$.
\end{enumerate}
\end{lemma}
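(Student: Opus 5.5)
We first check the sign claim. Fix $\mathcal J\in\mathfrak J_+$ and write $\mathcal J=\{j_0<j_1<\dots<j_m\}$ with $j_r\in I_r$. Since $Q_{\mathcal J}$ has simple real zeros, $\operatorname{sgn}Q_{\mathcal J}'(x_{j_r})=(-1)^{m-r}$: exactly $m-r$ nodes of $\mathcal J$ lie to the right of $x_{j_r}$. Because $x_{j_r}$ lies in band $I_r$, exactly $m-r$ zeros of $P_m$ lie to its right, namely $y_{r+1},\dots,y_m$. Hence $\operatorname{sgn}P_m(x_{j_r})=(-1)^{m-r}$ as well. The product $P_m(x_j)Q_{\mathcal J}'(x_j)$ is therefore positive for every $j\in\mathcal J$, and $\boldsymbol\omega^{(\mathcal J)}\ge0$, with strict positivity on $\mathcal J$. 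Consequently the cone on the right of (a) is contained in $\mathcal W_{\ge0}$.

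For the reverse inclusion in (a) we use a support-reduction argument. Take $\boldsymbol\omega\in\mathcal W_{\ge0}\setminus\{\mathbf 0\}$ and induct on $|\operatorname{supp}\boldsymbol\omega|$. The key claim is that $\operatorname{supp}\boldsymbol\omega$ contains some $\mathcal J\in\mathfrak J_+$, i.e. it meets every band.

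Suppose instead that some band $I_r$ is missed. Put $\boldsymbol\lambda=\mathbf D\boldsymbol\omega\in\ker\mathbf V$. Then $\sum_j\lambda_j p(x_j)=0$ for every $p$ with $\deg p\le m-1$. Let $p(x)=\prod_{k\ne r}(x-y_k)$ when $1\le r\le m-1$; this omits one zero of $P_m$ and has degree $m-1$. The values $\lambda_j p(x_j)=\omega_jP_m(x_j)p(x_j)$ have the sign of $\omega_j(x_j-y_r)$. This product changes sign exactly across $y_r$, which lies strictly between bands $I_{r-1}$ and $I_r$, not inside a gap. So this $p$ does not work as stated.

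We adjust the choice. The sign pattern of $P_m$ alternates across the $m+1$ bands. If band $I_r$ is empty in the support, then on the support we can merge $I_{r-1}$ and $I_r$, or $I_r$ and $I_{r+1}$ at the ends. The sign sequence of $\lambda_j$, ordered by $x_j$, then has at most $m-1$ sign changes. Concretely, choose $p=\prod_{k\ne r,r+1}(x-y_k)\cdot(x-c)$, where $c$ is any point in the empty band region; at the ends, delete one $y$ only. This $p$ has degree at most $m-1$. By construction $\lambda_jp(x_j)$ has constant strict sign on the support, which makes $\sum_j\lambda_jp(x_j)\ne0$, a contradiction. Equivalently, a nonzero vector in $\ker\mathbf V$ must have at least $m$ sign changes (a Descartes-type rule). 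This is the step I expect to need the most care, especially for the end bands $r=0$ and $r=m$.

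Once $\operatorname{supp}\boldsymbol\omega\supseteq\mathcal J$ for some $\mathcal J\in\mathfrak J_+$, set $t=\min_{j\in\mathcal J}\omega_j/\omega^{(\mathcal J)}_j>0$. Then $\boldsymbol\omega-t\boldsymbol\omega^{(\mathcal J)}\in\mathcal W_{\ge0}$ and has strictly smaller support, so induction finishes (a).

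For (b), note that $\mathcal W_{\ge0}$ is the intersection of $\ker\mathbf A$ with the orthant. Every $j$ lies in some $\mathcal J\in\mathfrak J_+$, because each band is nonempty and we may pick one index per band. So $\boldsymbol\omega^\ast=\sum_{\mathcal J\in\mathfrak J_+}\boldsymbol\omega^{(\mathcal J)}$ is strictly positive, and $\mathcal W_{>0}\ne\emptyset$. A polyhedral cone $\{\mathbf A\boldsymbol\omega=0,\ \boldsymbol\omega\ge0\}$ that contains a strictly positive point has as its relative interior, within its span, exactly the points where all inequalities $\omega_j\ge0$ are strict. This gives $\operatorname{relint}=\mathcal W_{\ge0}\cap\mathbb R^n_{>0}$. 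Here the span is all of $\ker\mathbf A$, since a neighbourhood of $\boldsymbol\omega^\ast$ in $\ker\mathbf A$ stays positive. Finally, a conic combination $\sum t_{\mathcal J}\boldsymbol\omega^{(\mathcal J)}$ with nonnegative terms has $\omega_j>0$ iff some $\mathcal J\ni j$ has $t_{\mathcal J}>0$, because $\omega^{(\mathcal J)}_j>0$ exactly for $j\in\mathcal J$. Together with (a), this gives the stated characterization.
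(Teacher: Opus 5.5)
Your overall strategy is sound and differs from the paper's, but the explicit test polynomial in your key step is wrong for an interior band. That is the case that actually needs care; the end bands are the easy ones.

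Suppose $1\le r\le m-1$ and $\operatorname{supp}\boldsymbol\omega$ misses $I_r$. Then $P_m$ changes sign twice across the empty interval $(y_r,y_{r+1})$. So $\lambda_j=\omega_jP_m(x_j)$ has the \emph{same} sign on $I_{r-1}$ and $I_{r+1}$. Your choice $p=\prod_{k\ne r,r+1}(x-y_k)\,(x-c)$ with $c\in(y_r,y_{r+1})$ inserts a sign change exactly where $\boldsymbol\lambda$ has none. Indeed, $P_m(x)p(x)=(x-y_r)(x-y_{r+1})(x-c)\prod_{k\ne r,r+1}(x-y_k)^2$ is negative to the left of $y_r$ and positive to the right of $y_{r+1}$.

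For instance, take Example~\ref{ex:oprl} with $r=1$, $c=\tfrac52$ and $\boldsymbol\omega\ge0$ supported in $\{1,4\}$. You get $\lambda_1p(x_1)=-\tfrac{15}{8}\omega_1$ and $\lambda_4p(x_4)=\tfrac{15}{8}\omega_4$, which have opposite signs, so no contradiction follows.

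The repair is to drop the factor $(x-c)$:
\begin{itemize}
\item For an interior band, take $p=\prod_{k\ne r,r+1}(x-y_k)$, of degree $m-2$. Then $P_m(x)p(x)=(x-y_r)(x-y_{r+1})\prod_{k\ne r,r+1}(x-y_k)^2>0$ at every node outside $[y_r,y_{r+1}]$.
\item For a missing end band, take $p=\prod_{k\ne 1}(x-y_k)$ (band $I_0$) or $p=\prod_{k\ne m}(x-y_k)$ (band $I_m$), of degree $m-1$. Again add no extra factor: $(x-c)$ would raise the degree to $m$, beyond what $\mathbf V$ annihilates.
\end{itemize}
In every case $\sum_j\omega_jP_m(x_j)p(x_j)>0$ for $\boldsymbol\omega\ge0$, $\boldsymbol\omega\ne\mathbf 0$, contradicting $\mathbf A\boldsymbol\omega=\mathbf 0$. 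This is the correct form of your Descartes-type rule: the roots of the test polynomial must sit only where $\boldsymbol\lambda$ actually changes sign.

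With that fix, your proof is complete and takes a genuinely different route for (a). The paper argues via polyhedral theory. It uses that $\mathcal W_{\ge0}$ is generated by its extreme rays, that these have minimal support and hence are circuit vectors, and a sign count showing that $\boldsymbol\omega^{(\mathcal J)}$ has constant sign if and only if $\mathcal J\in\mathfrak J_+$. That last step requires analysing circuits outside $\mathfrak J_+$ as well.

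Your support-reduction induction is self-contained and constructive. It never classifies the sign patterns of circuits outside $\mathfrak J_+$, and it does not invoke the extreme-ray/minimal-support correspondence. It needs only two things: positivity of the circuits in $\mathfrak J_+$, and the fact that every nonzero nonnegative kernel vector meets every band. As a by-product it gives an explicit procedure for writing any $\boldsymbol\omega\in\mathcal W_{\ge0}$ as a conic combination of the $\boldsymbol\omega^{(\mathcal J)}$.

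Your treatment of the sign claim and of (b) matches the paper's. You add one useful justification: the span of $\mathcal W_{\ge0}$ is all of $\ker\mathbf A$, thanks to the strictly positive point $\boldsymbol\omega^\ast$.
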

\begin{proof}
Fix $\mathcal{J}\in\mathfrak{J}_{+}$. Write $\mathcal{J}=\{j_0,\dots,j_m\}$ with $j_0<\cdots<j_m$
and set
\[
Q_{\mathcal{J}}(x)=(x-x_{j_0})\cdots(x-x_{j_m}).
\]
By \eqref{eq:circuit-solution}, for $r=0,\dots,m$ one has
\[
\omega^{(\mathcal{J})}_{j_r}=\frac{1}{P_m(x_{j_r})\,Q_{\mathcal{J}}'(x_{j_r})}.
\]
The rest of the entries are zero. We show that the nonzero entries of $\boldsymbol{\omega}^{(\mathcal{J})}$ all have the same
sign. Indeed, since $j_0<\cdots<j_m$, we have
\[
Q_{\mathcal{J}}'(x_{j_r})
=\left. \dfrac{Q_{\mathcal{J}}(x)}{(x-x_{j_r})} \right|_{x=x_{j_r}}
\]
and exactly $m-r$ factors are negative, while the remaining $r$ factors are positive. Hence
\[
\operatorname{sgn}\bigl(Q_{\mathcal{J}}'(x_{j_r})\bigr)=(-1)^{m-r}.
\]
On the other hand, strict interlacing gives $y_r\in(x_{i_r},x_{i_r+1})$ for $r=1,\dots,m$. Then, by the
definition of the bands $I_r=\{i_r+1,\dots,i_{r+1}\}$, it follows that, for each $j\in I_r$,
\[
y_r<x_j<y_{r+1},
\quad r=0,\dots,m,
\]
with the conventions $y_0=-\infty$ and $y_{m+1}=+\infty$. Since $P_m$ is monic with simple real zeros
$y_1<\cdots<y_m$, its sign alternates between successive zeros, and therefore
\[
\operatorname{sgn}\bigl(P_m(x_j)\bigr)=(-1)^{m-r},\quad j\in I_r.
\]
Now, because $\mathcal{J}\in\mathfrak{J}_{+}$ meets each $I_r$ in exactly one index, we have
$j_r\in I_r$ for $r=0,\dots,m$, and hence
\[
\operatorname{sgn}\bigl(P_m(x_{j_r})\,Q_{\mathcal{J}}'(x_{j_r})\bigr)
=(-1)^{m-r}\cdot(-1)^{m-r}=+1.
\]
Thus $\omega^{(\mathcal{J})}_{j_r}>0$ for $r=0,\dots,m$. This proves that $\boldsymbol{\omega}^{(\mathcal{J})}\ge0$. Now we prove statements (a) and (b).

\smallskip
(a).
For every $\mathcal{J}\in\mathfrak{J}_{+}$, Lemma~\ref{lem:circuit-solutions} gives
$\mathbf{A}\boldsymbol{\omega}^{(\mathcal{J})}=\mathbf{0}$, and the previous paragraph shows that
$\boldsymbol{\omega}^{(\mathcal{J})}\ge0$ after the global sign choice. Hence
\[
\operatorname{cone}\bigl\{\boldsymbol{\omega}^{(\mathcal{J})}:\ \mathcal{J}\in\mathfrak{J}_{+}\bigr\}
\subset \mathcal{W}_{\ge0}(Z_n,Z_m).
\]

For the reverse inclusion, note that $\mathcal{W}_{\ge0}(Z_n,Z_m)=\{\boldsymbol{\omega}:\mathbf{A}\boldsymbol{\omega}=\mathbf{0},
\ \boldsymbol{\omega}\ge0\}$ is a polyhedral cone. By standard polyhedral theory, it is generated by
its extreme rays, and each extreme ray admits a representative with inclusion-minimal support among
nonzero vectors in the cone; see, e.g., \cite[Chs.~7--8]{SchrijverLP} (extreme rays and minimal
support feasible directions). In our setting, the minimal-support elements of $\ker\mathbf{A}$ are
exactly the circuit vectors from Lemma~\ref{lem:circuit-solutions}, i.e., vectors supported on
$\mathcal{J}\subset\{1,\dots,n\}$ with $|\mathcal{J}|=m+1$. Among these circuit vectors, the previous
sign computation shows that a circuit can be chosen nonnegative if and only if its support meets each
band $I_r$ exactly once, namely $\mathcal{J}\in\mathfrak{J}_{+}$: if two indices fall in the same
band (or some band is missed), then the alternation of $\operatorname{sgn}(P_m(x_j))$ across the bands
cannot match the fixed alternation of $\operatorname{sgn}(Q_{\mathcal{J}}'(x_j))$ along the ordered
support, and the resulting circuit has mixed signs, hence no global sign makes it nonnegative.
Therefore, the extreme rays of $\mathcal{W}_{\ge0}(Z_n,Z_m)$ are precisely generated by the vectors
$\boldsymbol{\omega}^{(\mathcal{J})}$ with $\mathcal{J}\in\mathfrak{J}_{+}$, and (a) follows.

\smallskip
(b).
The equality
\[
\mathcal{W}_{>0}(Z_n,Z_m)=\mathcal{W}_{\ge0}(Z_n,Z_m)\cap \mathbb{R}^n_{>0}
\]
is immediate from the definitions. Moreover, since $\mathcal{W}_{\ge0}(Z_n,Z_m)$ is the intersection
of the subspace $\ker\mathbf{A}$ with the coordinate half-spaces $\{\omega_j\ge0\}$, its relative
interior in $\ker\mathbf{A}$ consists of those points for which all nonredundant inequalities are
strict; see, e.g., \cite[Chs.~7--8]{SchrijverLP}. Under strict interlacing, no coordinate
inequality is redundant because, as noted above, for every $j\in\{1,\dots,n\}$ one can choose
$\mathcal{J}\in\mathfrak{J}_{+}$ with $j\in\mathcal{J}$, so there exists some
$$\boldsymbol{\omega}^{(\mathcal{J})}\in\mathcal{W}_{\ge0}(Z_n,Z_m)$$ with $\omega^{(\mathcal{J})}_j>0$.
Hence,
\[
\operatorname{relint}\bigl(\mathcal{W}_{\ge0}(Z_n,Z_m)\bigr)
=\mathcal{W}_{\ge0}(Z_n,Z_m)\cap \mathbb{R}^n_{>0}
=\mathcal{W}_{>0}(Z_n,Z_m).
\]
Next, by (a), any $\boldsymbol{\omega}\in\mathcal{W}_{\ge0}(Z_n,Z_m)$ admits a representation
\[
\boldsymbol{\omega}=\sum_{\mathcal{J}\in\mathfrak{J}_{+}} t_{\mathcal{J}}\,
\boldsymbol{\omega}^{(\mathcal{J})},
\quad t_{\mathcal{J}}\ge0.
\]
If $\boldsymbol{\omega}\in\mathcal{W}_{>0}(Z_n,Z_m) \subset {W}_{\ge0}(Z_n,Z_m)$ and there existed $j$ such that
$t_{\mathcal{J}}=0$ for every $\mathcal{J}\in\mathfrak{J}_{+}$ with $j\in\mathcal{J}$, then the
$j$-th component would satisfy
\[
\omega_j=\sum_{\mathcal{J}\ni j} t_{\mathcal{J}}\,\omega^{(\mathcal{J})}_j=0,
\]
contradicting $\omega_j>0$. Conversely, if the above representation holds and for every
$j\in\{1,\dots,n\}$ there exists $\mathcal{J}\in\mathfrak{J}_{+}$ with $j\in\mathcal{J}$ and
$t_{\mathcal{J}}>0$, then
\[
\omega_j=\sum_{\mathcal{J}\ni j} t_{\mathcal{J}}\,\omega^{(\mathcal{J})}_j>0,
\]
since each $\omega^{(\mathcal{J})}_j\ge0$ and at least one summand is strictly positive. This gives
the stated characterization of $\mathcal{W}_{>0}(Z_n,Z_m)$.

Finally, to see $\mathcal{W}_{>0}(Z_n,Z_m)\neq\emptyset$, for each $j\in\{1,\dots,n\}$ choose some
$\mathcal{J}_j\in\mathfrak{J}_{+}$ with $j\in\mathcal{J}_j$. This is always possible: choose an arbitrary index from each band other than the one containing $j$, and in that band select $j$ itself. Then,
\[
\boldsymbol{\omega}=\sum_{j=1}^{n}\boldsymbol{\omega}^{(\mathcal{J}_j)}
\]
belongs to $\mathcal{W}_{\ge0}(Z_n,Z_m)$ and has $\omega_j>0$ for every $j$, hence
$\boldsymbol{\omega}\in\mathcal{W}_{>0}(Z_n,Z_m)$.
\end{proof}

We can now combine the quadrature reformulation (Lemma~\ref{lemma:OPRLlemma}),
the description of \(\ker\mathbf{A}\) (Lemma~\ref{lem:circuit-solutions}), and the positivity
analysis under strict interlacing (Lemma~\ref{lem:positive-cone}) to solve
Problem~\ref{prob:underdetermined-oprl}.

\begin{theorem}\label{thm:OPRL-solution}
Assume the notation and hypotheses of Problem~\ref{prob:underdetermined-oprl}.
Then, it admits a solution if and only if
\(Z_m\) strictly interlaces \(Z_n\) in the sense of Definition~\ref{def:strict-interlacing-oprl}.
In this case, the solution is not unique whenever \(m<n-1\).  More precisely, fix any \(\boldsymbol{\omega}\in\mathcal{W}_{>0}(Z_n,Z_m)\) as in
Lemma~\ref{lem:positive-cone}, and consider the discrete measure
\(\mu=\sum_{j=1}^n \omega_j\,\delta_{x_j}\).
Then the quadrature moments
\[
\mu_k=\sum_{j=1}^n \omega_j\,x_j^k,\quad k=0,\dots,2n-1,
\]
determine a Jacobi matrix \(J_n\) (equivalently, the first \(n\) three-term recurrence coefficients)
with
\[
\sigma(J_n)=Z_n, \quad
\sigma(J_m)=Z_m,
\]
where \(J_m\) denotes the leading principal submatrix of \(J_n\) of order \(m\).
Conversely, every Jacobi matrix \(J_n\) satisfying \(\sigma(J_n)=Z_n\) and \(\sigma(J_m)=Z_m\)
arises from a choice of weights \(\boldsymbol{\omega}\in\mathcal{W}_{>0}(Z_n,Z_m)\) through the
construction in Lemma~\ref{lemma:OPRLlemma} and the positivity mechanism in
Lemma~\ref{lem:positive-cone}.
\end{theorem}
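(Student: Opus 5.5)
The plan is to reduce everything to Lemma~\ref{lemma:OPRLlemma}, which makes solvability of Problem~\ref{prob:underdetermined-oprl} equivalent to $\mathcal{W}_{>0}(Z_n,Z_m)\neq\emptyset$.

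\emph{Necessity.} Suppose a Jacobi matrix $J_n$ solves the problem. Then $\sigma(J_n)=Z_n$ and $\sigma(J_m)=Z_m$, and $J_n$ is similar to a real symmetric tridiagonal matrix. To see this, conjugate by a positive diagonal matrix so that both off-diagonals become $\sqrt{\gamma_k}$; this makes the leading principal submatrices symmetric as well. Cauchy interlacing for principal submatrices gives $x_k\le y_k\le x_{k+n-m}$. This alone does not yet give at most one $y$ per gap $(x_i,x_{i+1})$, so I would argue instead with the polynomials.

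Take $P_{m}$ and $P_{n}$ from the OPRL family attached to $\mu$ via (i). Two tools are available. First, $P_n$ is orthogonal to all polynomials of degree $<n$. Second, there is Gauss quadrature on the zeros of $P_n$ with positive weights. By Lemma~\ref{lemma:OPRLlemma}(iii), $\sum_j\omega_jP_m(x_j)q(x_j)=0$ for every $q$ with $\deg q\le m-1$.

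Suppose some open gap $(x_i,x_{i+1})$ contains two zeros $y_k,y_{k+1}$. Form $q=P_m/((x-y_k)(x-y_{k+1}))$, which has degree $m-2$. Then $P_m q=q^2(x-y_k)(x-y_{k+1})$. This is $\ge0$ at every node $x_j$, because no node lies in $(y_k,y_{k+1})\subset(x_i,x_{i+1})$. It is also strictly positive at the nodes where $q\ne0$, and $q$ vanishes only at zeros of $P_m$, which are not nodes.

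Since $m-2\le m-1$, the quadrature sum of $P_m q$ must vanish. But it is a sum of positive terms, which is a contradiction. This handles disjointness and the gap condition. The common-zero case is excluded: $Z_n\cap Z_m=\emptyset$ is needed anyway, since $P_m$ and $P_n$ from one OPRL family with a common zero would force, through the recurrence, a common zero of consecutive polynomials, which is impossible. The intervals $(-\infty,x_1)$ and $(x_n,\infty)$ are handled the same way using the single factor $(x-y_1)$ or $(x-y_m)$. For instance, if $y_1<x_1$, take $q=P_m/(x-y_1)$; then $P_mq=q^2(x-y_1)>0$ at all nodes. This yields exactly Definition~\ref{def:strict-interlacing-oprl}.

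\emph{Sufficiency and the construction.} Under strict interlacing, Lemma~\ref{lem:positive-cone}(b) gives $\boldsymbol\omega\in\mathcal{W}_{>0}$. With $\mu=\sum\omega_j\delta_{x_j}$ the moment matrix up to order $n$ is positive definite on polynomials of degree $\le n-1$. It has $n$ support points, so Gram--Schmidt produces $P_0,\dots,P_n$, with $P_n$ equal to the node polynomial and $P_m$ given by (iii). The recurrence coefficients are then read off from the moments $\mu_0,\dots,\mu_{2n-1}$ (e.g.\ via Hankel determinants), with $\gamma_k>0$ by positivity. This gives $J_n$ with the stated spectra, as in Lemma~\ref{lemma:OPRLlemma}.

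\emph{Converse parametrization.} This is the (i)$\Rightarrow$(iii) direction of Lemma~\ref{lemma:OPRLlemma}. The Gauss weights of the given $J_n$ lie in $\mathcal{W}_{>0}$, and the moments they define reproduce $J_n$ because the recurrence coefficients are determined by the moments up to order $2n-1$.

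\emph{Non-uniqueness for $m<n-1$.} The map from normalized weights ($\sum\omega_j=1$) to $J_n$ is injective. A Jacobi matrix determines its spectral measure $\sum\omega_j\delta_{x_j}$ with $\omega_j=\mu_0\,(\text{first eigenvector component})^2$, so distinct normalized weight vectors give distinct $J_n$. The set $\mathcal{W}_{>0}$ is a nonempty open subset of the $(n-m)$-dimensional space $\ker\mathbf A$, by Lemmas~\ref{lem:vandermonde-reduction} and~\ref{lem:positive-cone}(b). After normalizing it has dimension $n-m-1\ge1$, so there are infinitely many solutions.

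The main obstacle I expect is the necessity step: making sure the positivity argument covers all configurations cleanly, including the unbounded end intervals. The injectivity of weights to matrices also needs care.
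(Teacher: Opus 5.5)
Your proof is correct, given that $Z_n\cap Z_m=\emptyset$ is taken as a hypothesis (see the second paragraph). For sufficiency and the converse parametrization you follow the paper: a positive weight from Lemma~\ref{lem:positive-cone}(b), then Gram--Schmidt/Stieltjes on $\mu_0,\dots,\mu_{2n-1}$, and conversely the Gauss weights of a given $J_n$ lie in $\mathcal{W}_{>0}(Z_n,Z_m)$.

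You depart from the paper on necessity and on non-uniqueness, and in both places your route does more work. The paper settles necessity in one line by citing eigenvalue interlacing for Hermitian principal submatrices. As you correctly note, that only gives $x_k\le y_k\le x_{k+n-m}$, which for $m<n-1$ does not exclude two points of $Z_m$ in one gap. Your argument inserts $q=P_m/((x-y_k)(x-y_{k+1}))$, resp.\ $q=P_m/(x-y_1)$ or $P_m/(x-y_m)$, into condition (iii) of Lemma~\ref{lemma:OPRLlemma}. This produces a vanishing sum of strictly positive terms. It genuinely uses the tridiagonal/OPRL structure and proves exactly the separation required by Definition~\ref{def:strict-interlacing-oprl}, including $y_1>x_1$ and $y_m<x_n$. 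The paper also only asserts non-uniqueness. Your injectivity of $\boldsymbol\omega\mapsto J_n$ on normalized weights, combined with $\dim\ker\mathbf A=n-m$ and openness of $\mathcal{W}_{>0}$ in $\ker\mathbf A$, actually proves that the solution set has dimension $n-m-1\ge1$. The paper's version is shorter, but its necessity citation does not deliver the needed statement.

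One sentence is false and should be removed. A common zero of $P_m$ and $P_n$ does \emph{not} force a common zero of consecutive polynomials when $m<n-1$. For example, $\beta_0=\beta_1=\beta_2=0$ and $\gamma_1=\gamma_2=\tfrac12$ give $P_1=x$, $P_2=x^2-\tfrac12$ and $P_3=x^3-x$. Then $0\in\sigma(J_1)\cap\sigma(J_3)$ while $P_2(0)\neq0$. So disjointness cannot be derived. It must come from the hypotheses: ``distinct'' in Problem~\ref{prob:underdetermined-oprl}, which the paper uses explicitly in the proof of Lemma~\ref{lem:vandermonde-reduction}. Without it, the ``only if'' direction fails: $Z_3=\{-1,0,1\}$, $Z_1=\{0\}$ is realized by the matrix above but is not strictly interlacing. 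Once $Z_n\cap Z_m=\emptyset$ is assumed, your positivity argument, which needs $q(x_j)\neq0$ at every node, goes through as written.
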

\begin{proof}
(Necessity.)
Assume there exists a Jacobi matrix \(J_n\) such that \(\sigma(J_n)=Z_n\) and
\(\sigma(J_m)=Z_m\), where \(J_m\) is the leading principal submatrix of order \(m\).
By the standard eigenvalue interlacing theorem for Hermitian principal submatrices,
the spectrum of \(J_m\) strictly interlaces the spectrum of \(J_n\). Hence \(Z_m\)
strictly interlaces \(Z_n\) in the sense of Definition~\ref{def:strict-interlacing-oprl}.

(Sufficiency.)
Assume now that \(Z_m\) strictly interlaces \(Z_n\). By Lemma~\ref{lem:positive-cone},
the set \(\mathcal{W}_{>0}(Z_n,Z_m)\) is nonempty, so we may choose
\(\boldsymbol{\omega}=(\omega_1,\dots,\omega_n)\in\mathcal{W}_{>0}(Z_n,Z_m)\).
Equivalently, \(\boldsymbol{\omega}\) is a strictly positive solution of the Vandermonde system
\eqref{eq:vandermonde}. Lemma~\ref{lemma:OPRLlemma} then yields a positive Borel measure
\(\mu=\sum_{j=1}^n\omega_j\,\delta_{x_j}\) for which the monic OPRL satisfy
\(P_n(\cdot;\mu)=\prod_{j=1}^n(x-x_j)\) and \(P_m(\cdot;\mu)=\prod_{k=1}^m(x-y_k)\).
By the same lemma, this is equivalent to the existence of a Jacobi matrix \(J_n\) with
\(\sigma(J_n)=Z_n\) and \(\sigma(J_m)=Z_m\). This proves solvability.

Finally, the description of solutions follows from the construction in the proof of Lemma~\ref{lemma:OPRLlemma}: given \(\boldsymbol{\omega}\in\mathcal{W}_{>0}(Z_n,Z_m)\), the moments
\(\mu_k=\sum_{j=1}^n\omega_j x_j^k\) for \(k=0,\dots,2n-1\) coincide with the moments of \(\mu\) up to
order \(2n-1\), and the first \(n\) recurrence coefficients (hence \(J_n\)) are recovered from these
moments by the classical Stieltjes procedure. Conversely, any solution \(J_n\) determines an OPRL
family and hence a discrete quadrature representation at the zeros of \(P_n\), producing weights
\(\boldsymbol{\omega}\in\mathcal{W}_{>0}(Z_n,Z_m)\) and the corresponding moment data. 
\end{proof}

As a consequence of Theorem~\ref{thm:OPRL-solution}, we obtain the following extension of
Wendroff's theorem beyond consecutive degrees.

\begin{coro}[Generalized Wendroff  on $\mathbb{R}$]\label{cor:gen-Wendroff}
Fix integers \(1\le m<n\) and let
\[
Z_n=\{x_k\}_{k=1}^{n}, \quad Z_m=\{y_k\}_{k=1}^{m}.
\]
be sets of distinct real numbers with \(x_1<\cdots<x_n\) and \(y_1<\cdots<y_m\).
Then, there exists a positive Borel measure \(\mu\) on \(\mathbb R\) and a monic OPRL family
\(\{P_k(\cdot;\mu)\}_{k\ge 0}\) such that
\[
\mathrm{zeros}\bigl(P_n(\cdot;\mu)\bigr)=Z_n,\quad 
\mathrm{zeros}\bigl(P_m(\cdot;\mu)\bigr)=Z_m
\]
if and only if \(Z_m\) strictly interlaces \(Z_n\) in the sense of
Definition~\ref{def:strict-interlacing-oprl}.
\end{coro}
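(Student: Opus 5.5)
The corollary is a reformulation of Theorem~\ref{thm:OPRL-solution} through the equivalence (i)$\Leftrightarrow$(ii) of Lemma~\ref{lemma:OPRLlemma}. The plan has three steps: first pass from orthogonal polynomials to Jacobi matrices, then handle the finite versus infinite family, and finally invoke the theorem in each direction.

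First, suppose a positive Borel measure $\mu$ exists with $\mathrm{zeros}(P_n(\cdot;\mu))=Z_n$ and $\mathrm{zeros}(P_m(\cdot;\mu))=Z_m$. Since monic polynomials are determined by their zeros, $P_n(\cdot;\mu)=\prod_j(x-x_j)$ and $P_m(\cdot;\mu)=\prod_k(x-y_k)$. Statement (i) of Lemma~\ref{lemma:OPRLlemma} then holds, so there is a Jacobi matrix $J_n$ with $\sigma(J_n)=Z_n$ and $\sigma(J_m)=Z_m$. Problem~\ref{prob:underdetermined-oprl} is therefore solvable, and the necessity half of Theorem~\ref{thm:OPRL-solution} gives that $Z_m$ strictly interlaces $Z_n$.

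Two small points need checking here. The recurrence data should satisfy $\gamma_k>0$ for $k\le n-1$; this holds automatically when $\mu$ has at least $n$ points in its support, which is forced by the existence of $P_n(\cdot;\mu)$ in the positive-definite setting. The lemma also requires $Z_n\cap Z_m=\emptyset$. I would derive this from the fact that $P_m$ and $P_n$ cannot share a zero: otherwise eigenvalue interlacing for the Hermitian-similar $J_m\subset J_n$ would fail strictness. Alternatively, I would simply regard disjointness as part of the implicit hypotheses, as the paper does throughout.

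Conversely, suppose $Z_m$ strictly interlaces $Z_n$. The sufficiency part of Theorem~\ref{thm:OPRL-solution}, through Lemma~\ref{lem:positive-cone}, supplies $\boldsymbol{\omega}\in\mathcal W_{>0}(Z_n,Z_m)$ and the discrete measure $\mu_0=\sum_j\omega_j\delta_{x_j}$. A discrete measure with $n$ atoms only produces orthogonal polynomials up to degree $n$. The statement asks for an OPRL family $\{P_k(\cdot;\mu)\}_{k\ge0}$, which I read as a genuine infinite family.

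To obtain one, I would take the recurrence coefficients $\beta_0,\dots,\beta_{n-1}$, $\gamma_1,\dots,\gamma_{n-1}$ of $J_n$ and extend them by arbitrary $\beta_k\in\mathbb R$ and $\gamma_k>0$ for $k\ge n$, say $\beta_k=0$ and $\gamma_k=1$. By Favard's theorem (as recalled in the Introduction) there is a positive Borel measure $\mu$ whose monic OPRL satisfy this extended recurrence. Its first $n+1$ members coincide with those generated by $J_n$, so $P_n(\cdot;\mu)=\det(xI_n-J_n)$ and $P_m(\cdot;\mu)=\det(xI_m-J_m)$, which have zero sets $Z_n$ and $Z_m$. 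Equivalently, one could take any positive definite $\mu$ matching the first $2n-1$ moments of $\mu_0$, as in the proof of Lemma~\ref{lemma:OPRLlemma}.

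The only real obstacle is this last point, that $\mu_0$ is degenerate beyond degree $n$. The Favard extension removes it. Everything else is a direct citation of Theorem~\ref{thm:OPRL-solution} and Lemma~\ref{lemma:OPRLlemma}.
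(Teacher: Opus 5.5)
Your overall route is the paper's. There the corollary is proved in one line from Theorem~\ref{thm:OPRL-solution}, Lemma~\ref{lemma:OPRLlemma} and Favard's theorem. Your extension of $\beta_0,\dots,\beta_{n-1},\gamma_1,\dots,\gamma_{n-1}$ by arbitrary $\beta_k\in\mathbb R$, $\gamma_k>0$ is exactly what the appeal to Favard supplies, and the sufficiency half is fine.

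The first of your two options for disjointness, however, is false. For $m<n-1$ the polynomials $P_m$ and $P_n$ can share a zero. For Lebesgue measure on $[-1,1]$, $P_1(x)=x$ and $P_3(x)=x^3-\tfrac35x$ both vanish at $0$. What strict interlacing gives for a Jacobi matrix is only $\lambda_k(J_n)<\lambda_k(J_m)<\lambda_{k+n-m}(J_n)$. This still allows $\lambda_k(J_m)$ to coincide with an intermediate eigenvalue of $J_n$; here $\lambda_1(J_1)=\lambda_2(J_3)=0$.

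The same example shows your second option is not a convenience but is forced. With $Z_3=\{-\sqrt{3/5},0,\sqrt{3/5}\}$ and $Z_1=\{0\}$ a measure exists. Yet $Z_1$ does not strictly interlace $Z_3$ in the sense of Definition~\ref{def:strict-interlacing-oprl}, since $0$ lies in no open gap $(x_i,x_{i+1})$. So the ``only if'' direction holds only if ``distinct real numbers'' is read as including $Z_n\cap Z_m=\emptyset$, as in Problem~\ref{prob:underdetermined-oprl}. You should state this as a hypothesis rather than try to derive it.

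Relatedly, if you want necessity to be self-contained, the Cauchy bounds $x_k\le y_k\le x_{k+n-m}$ alone do not stop two points of $Z_m$ from falling into the same gap once $n-m\ge 2$. The OPRL-specific fact you need is that a zero of $P_n$ lies between any two consecutive zeros $y_k<y_{k+1}$ of $P_m$. This follows by applying \eqref{eq:gauss-jacobi} to $P_m(x)^2/\bigl((x-y_k)(x-y_{k+1})\bigr)$.
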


\begin{proof}
{  This follows from Theorem~\ref{thm:OPRL-solution} and Lemma~\ref{lemma:OPRLlemma}, together with Favard's theorem.}
\end{proof}

Of course, describing \emph{all} strictly positive solutions \(\boldsymbol{\omega}\in\mathcal{W}_{>0}(Z_n,Z_m)\)
can hide a nontrivial combinatorial step: one must understand which nonnegative circuit vectors
\(\boldsymbol{\omega}^{(\mathcal{J})}\) may be combined so as to obtain a vector with all entries strictly
positive. This depends strongly on the fine structure of the interlacing (in particular, on the bands
determined by the indices \(i_0,i_1\dots,i_{m},i_{m+1}\)). While there should be a general procedure ensuring that
all admissible combinations are taken into account whenever strict interlacing holds, developing such a
global enumeration scheme is not the purpose of the present paper. On the other hand, for specific
interlacing patterns it is straightforward to devise an explicit strategy to generate all admissible
weights.

Nonetheless, in Section~\ref{sec:algos} we present an algorithm, together with
Example~\ref{ex:oprl}, illustrating how the above theory can be implemented to describe the family of
Jacobi matrices \(J_n\) realizing the prescribed spectral data. In the next section, we establish the
unit-circle analogue in the POPUC setting.

\section{Main results on $\mathbb S^1$}\label{sec:POPUC}
We now treat the POPUC case, following the same strategy as in the OPRL setting. Fix integers $1\le m<n$ and two sets
$Z_n,Z_m\subset\mathbb{S}^1$ as in Lemma~\ref{lemma:POPUClemma}, with $Z_n\cap Z_m=\emptyset$.
Write
\[
Z_n=\{\zeta_j=e^{i\theta_j}:1\le j\le n\},\quad
Z_m=\{\xi_k=e^{i\varphi_k}:1\le k\le m\}.
\]

As explained in Section~\ref{sec:prelim}, Lemma \ref{lem:vandermonde-reduction-POPUC}, solvability is equivalent to the existence of
weights $\omega_1,\dots,\omega_n>0$ satisfying the POPUC Vandermonde-type system
\eqref{eq:vandermondePOPUC}. Accordingly, we introduce the set of strictly positive solutions
\[
\mathcal{W}_{>0}(Z_n,Z_m)
=
\Bigl\{ \boldsymbol{\omega}=(\omega_1,\dots,\omega_n)\in\mathbb{R}^n:
\mathbf{A}\boldsymbol{\omega}=\mathbf{0}\ \text{and}\ \omega_j>0\ \text{for all }j \Bigr\},
\]
where $\mathbf{A}$ is the coefficient matrix in \eqref{eq:vandermondePOPUC} written in the form
$\mathbf{A}\boldsymbol{\omega}=\mathbf{0}$ (Lemma~\ref{lem:vandermonde-reduction-POPUC} below).
The goal of this section is to show that $\mathcal{W}_{>0}(Z_n,Z_m)\neq\emptyset$ precisely under
strict interlacing, and to describe all such $\boldsymbol{\omega}$ explicitly. As in the OPRL case,
the first step is to make the Vandermonde structure transparent.

\begin{lemma}\label{lem:vandermonde-reduction-POPUC}
Assume the hypotheses and notation of Lemma~\ref{lemma:POPUClemma}. Consider the $(m-1)\times n$
matrix $\mathbf{A}=(A_{k,j})_{1\le k\le m-1,\;1\le j\le n}$ with entries
\[
A_{k,j}
=\zeta_j^{\,k-1}\,\frac{P_m(\zeta_j)}{\zeta_j^{\,m-1}},
\quad 1\le k\le m-1,\;1\le j\le n.
\]
Then $\mathbf{A}$ factorizes as
\[
\mathbf{A}=\mathbf{V}\mathbf{D},
\]
where $\mathbf{V}$ is the $(m-1)\times n$ complex Vandermonde matrix
\[
\mathbf{V}=
\begin{bmatrix}
\;1 & 1 & \cdots & 1\;\\[3pt]
\;\zeta_1 & \zeta_2 & \cdots & \zeta_n\;\\[3pt]
\;\vdots & \vdots & & \vdots\;\\[3pt]
\;\zeta_1^{\,m-2} & \zeta_2^{\,m-2} & \cdots & \zeta_n^{\,m-2}\;
\end{bmatrix},
\]
and $\mathbf{D}=\operatorname{diag}\bigl(\zeta_1^{1-m}P_m(\zeta_1),\dots,\zeta_n^{1-m}P_m(\zeta_n)\bigr)$.
In particular, $P_m(\zeta_j)\neq0$ for all $j$, so $\mathbf{D}$ is invertible, and therefore
\[
\operatorname{rank}\mathbf{A}=m-1,\quad \dim\ker\mathbf{A}=n-m+1.
\]
Moreover, \eqref{eq:vandermondePOPUC} is exactly the matrix equation $\mathbf{A}\boldsymbol{\omega}=\mathbf{0}$.
\end{lemma}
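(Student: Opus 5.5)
This lemma is the unit-circle analogue of Lemma~\ref{lem:vandermonde-reduction}, and the proof follows the same three steps. Throughout, $P_m$ denotes the monic polynomial $\Psi_m$ of Lemma~\ref{lemma:POPUClemma}.

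\emph{Factorization.} First verify $\mathbf{A}=\mathbf{V}\mathbf{D}$ entrywise. One has $(\mathbf{V}\mathbf{D})_{k,j}=\zeta_j^{k-1}\cdot\zeta_j^{1-m}P_m(\zeta_j)$, which is $A_{k,j}$ by definition. Next, relate $\mathbf{A}\boldsymbol{\omega}=\mathbf{0}$ to \eqref{eq:vandermondePOPUC}. Since $\zeta_j^{k-1}\zeta_j^{1-m}=\zeta_j^{-(m-k)}$, the $k$-th row of $\mathbf{A}\boldsymbol{\omega}=\mathbf{0}$ reads
\[
\sum_{j=1}^n\omega_j P_m(\zeta_j)\zeta_j^{-(m-k)}=0 .
\]
As $k$ runs through $1,\dots,m-1$, the exponent $m-k$ runs through $m-1,\dots,1$. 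The rows of $\mathbf{A}$ are therefore exactly the equations of \eqref{eq:vandermondePOPUC}, listed in reversed order.

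\emph{Invertibility of $\mathbf{D}$.} The hypothesis $Z_n\cap Z_m=\emptyset$ gives $P_m(\zeta_j)\neq0$ for every $j$. Each $\zeta_j$ is nonzero because $|\zeta_j|=1$, so $\zeta_j^{1-m}\ne0$. Hence every diagonal entry of $\mathbf{D}$ is nonzero and $\mathbf{D}$ is invertible.

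\emph{Rank.} Any choice of $m-1$ columns of $\mathbf{V}$ gives a square complex Vandermonde matrix with pairwise distinct nodes, so its determinant is nonzero. Such a choice exists because $m-1\le n$. Thus $\operatorname{rank}\mathbf{V}=m-1$, and $\operatorname{rank}\mathbf{A}=\operatorname{rank}\mathbf{V}=m-1$ since $\mathbf{D}$ is invertible. Rank–nullity for the linear map $\mathbb{C}^n\to\mathbb{C}^{m-1}$ then gives $\dim\ker\mathbf{A}=n-m+1$.

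\emph{Caveats.} The only delicate point is bookkeeping rather than mathematics. The kernel here is a complex kernel, because $\mathbf{A}$ has complex entries. Positivity of $\boldsymbol{\omega}$ will be imposed on real vectors later, and the real solution set may have a different dimension; that issue is not part of this lemma. In the degenerate case $m=1$, $\mathbf{A}$ is the empty $0\times n$ matrix, the system \eqref{eq:vandermondePOPUC} is vacuous, and the statement holds trivially with kernel $\mathbb{C}^n$.
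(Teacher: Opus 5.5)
Your proof is correct and follows the paper's argument step by step: the entrywise check of $\mathbf{A}=\mathbf{V}\mathbf{D}$, the identification of the rows of $\mathbf{A}\boldsymbol{\omega}=\mathbf{0}$ with \eqref{eq:vandermondePOPUC} in reversed order (your reindexing through $m-k$ is the precise form of the paper's relabeling), invertibility of $\mathbf{D}$ from $Z_n\cap Z_m=\emptyset$, and full row rank of the Vandermonde factor followed by rank--nullity. On your caveat, the real kernel in fact also has dimension $n-m+1$: on $\mathbb{S}^1$ one has $\overline{\Psi_m(\zeta)}=(-1)^m\bigl(\prod_{l}\overline{\xi_l}\bigr)\zeta^{-m}\Psi_m(\zeta)$, so the $k$-th equation for $\overline{\boldsymbol{\omega}}$ is the conjugate of a unimodular multiple of the $(m-k)$-th equation for $\boldsymbol{\omega}$; hence $\ker\mathbf{A}$ is closed under conjugation and is spanned by real vectors.
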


\begin{proof}
Starting from \eqref{eq:vandermondePOPUC},
\[
\sum_{j=1}^n \omega_j\,P_m(\zeta_j)\,\zeta_j^{-k}=0,\quad k=1,\dots,m-1,
\]
rewrite $\zeta_j^{-k}=\zeta^{m-1-k}_j\zeta^{1-m}_j$ to obtain
\[
\sum_{j=1}^n \omega_j\,\frac{P_m(\zeta_j)}{\zeta_j^{m-1}}\,\zeta_j^{m-1-k}=0,\quad k=1,\dots,m-1.
\]
Relabeling $k \mapsto m-1-k$ gives precisely $\mathbf{A}\boldsymbol{\omega}=\mathbf{0}$ with the stated entries.

The factorization $\mathbf{A}=\mathbf{V}\mathbf{D}$ is immediate from the definitions.
Since $Z_n\cap Z_m=\emptyset$, we have $P_m(\zeta_j)\neq0$ for every $j$, hence $\mathbf{D}$ is invertible.
Because the nodes $\zeta_1,\dots,\zeta_n$ are pairwise distinct, the usual Vandermonde determinant argument
implies that $\mathbf{V}$ has full row rank $m-1$.
Consequently,
\[
\operatorname{rank}\mathbf{A}=\operatorname{rank}(\mathbf{V}\mathbf{D})=\operatorname{rank}\mathbf{V}=m-1,
\]
and therefore $\dim\ker\mathbf{A}=n-(m-1)=n-m+1$.
\end{proof}

We next describe the full solution space of \eqref{eq:vandermondePOPUC} with no sign restrictions.
This is the unit-circle analogue of Lemma~\ref{lem:circuit-solutions}. Here the matrix $\mathbf{A}$
is complex-valued, but we will obtain a spanning family of \emph{real} circuit vectors; this is the
form needed for the positivity analysis.

\begin{lemma}\label{lem:circuit-solutions-POPUC}
Assume the hypotheses and notation of Lemma~\ref{lem:vandermonde-reduction-POPUC}.
For any subset \(\mathcal{J}\subset\{1,\dots,n\}\) with \(|\mathcal{J}|=m\), write \(\mathcal{J}=\{j_1,\dots,j_{m}\}\) and set
\[
Q_{\mathcal{J}}(z)=(z-\zeta_{j_1})\cdots(z-\zeta_{j_{m}}).
\]
Define \(\boldsymbol{\omega}^{(\mathcal{J})}\in\mathbb R^n\) by
\begin{equation}\label{eq:circuit-solution-POPUC-sine}
\omega^{(\mathcal{J})}_j=
\begin{cases}
\displaystyle
\frac{1}{
\displaystyle
\prod_{k=1}^m \sin\!\left(\dfrac{\theta_j-\varphi_k}{2}\right)\,
\prod_{\substack{r=0\\ j_r\neq j}}^{m}
\sin\!\left(\dfrac{\theta_j-\theta_{j_r}}{2}\right)}, & j\in \mathcal{J},\\[7pt]
0, & j\notin \mathcal{J}.
\end{cases}
\end{equation}
Then \(\boldsymbol{\omega}^{(\mathcal{J})}\neq\mathbf 0\) and \(\mathbf A\,\boldsymbol{\omega}^{(\mathcal{J})}=\mathbf 0\).
Moreover,
\[
\ker \mathbf A=\mathrm{span}\bigl\{\boldsymbol{\omega}^{(\mathcal{J})}:\ \mathcal{J}\subset\{1,\dots,n\},\ |\mathcal{J}|=m\bigr\}.
\]
In particular, \(\ker\mathbf A\cap\mathbb R^n\) is spanned over \(\mathbb R\) by the same family.
\end{lemma}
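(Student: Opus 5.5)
The plan is to reduce to the OPRL argument of Lemma~\ref{lem:circuit-solutions} through the factorization $\mathbf A=\mathbf V\mathbf D$ of Lemma~\ref{lem:vandermonde-reduction-POPUC}. Since $\mathbf D$ is invertible, $\ker\mathbf A=\mathbf D^{-1}\ker\mathbf V$, and $\ker\mathbf V$ is the space of $\boldsymbol\lambda\in\mathbb C^n$ with $\sum_j\lambda_j\zeta_j^k=0$ for $k=0,\dots,m-2$.

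For $|\mathcal J|=m$ the barycentric weights $\lambda_j^{(\mathcal J)}=1/Q_{\mathcal J}'(\zeta_j)$, $j\in\mathcal J$ (and $0$ otherwise), satisfy these identities. This is the partial-fraction identity $\sum_{j\in\mathcal J}\zeta_j^k/Q_{\mathcal J}'(\zeta_j)=0$ for $k\le m-2$, i.e.\ the vanishing of the leading coefficient of the degree-$\le m-1$ interpolant of $z^k$; it holds verbatim for complex nodes. Hence the complex vector
\[
\tilde{\boldsymbol\omega}^{(\mathcal J)}_j=\frac{\zeta_j^{\,m-1}}{P_m(\zeta_j)\,Q_{\mathcal J}'(\zeta_j)}\quad (j\in\mathcal J),\qquad \tilde{\boldsymbol\omega}^{(\mathcal J)}_j=0\quad (j\notin\mathcal J),
\]
lies in $\ker\mathbf A$. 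The key step is to show that $\tilde{\boldsymbol\omega}^{(\mathcal J)}=c_{\mathcal J}\,\boldsymbol\omega^{(\mathcal J)}$ for a nonzero constant $c_{\mathcal J}\in\mathbb C$ independent of $j$. That gives both $\mathbf A\boldsymbol\omega^{(\mathcal J)}=\mathbf 0$ and $\boldsymbol\omega^{(\mathcal J)}\neq\mathbf 0$; the latter holds because no sine vanishes, since the points are distinct and $Z_n\cap Z_m=\emptyset$.

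For the key step, fix real arguments $\theta_j,\varphi_k$ once and for all and use the half-angle identity
\[
e^{i\alpha}-e^{i\beta}=2i\,e^{i(\alpha+\beta)/2}\sin\!\Bigl(\frac{\alpha-\beta}{2}\Bigr).
\]
This gives
\[
P_m(\zeta_j)=(2i)^m e^{i(m\theta_j+\Phi)/2}\prod_k\sin\frac{\theta_j-\varphi_k}{2},\qquad \Phi=\sum_k\varphi_k,
\]
and
\[
Q_{\mathcal J}'(\zeta_j)=(2i)^{m-1}e^{i((m-1)\theta_j+\Theta_{\mathcal J}-\theta_j)/2}\prod_{j_r\ne j}\sin\frac{\theta_j-\theta_{j_r}}{2},\qquad \Theta_{\mathcal J}=\sum_{r}\theta_{j_r}.
\]
Multiplying by $\zeta_j^{1-m}=e^{i(1-m)\theta_j}$, the total coefficient of $\theta_j$ in the exponent is $(1-m)+\tfrac m2+\tfrac{m-2}{2}=0$. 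So $\zeta_j^{1-m}P_m(\zeta_j)Q_{\mathcal J}'(\zeta_j)=(2i)^{2m-1}e^{i(\Phi+\Theta_{\mathcal J})/2}$ times the real sine product in \eqref{eq:circuit-solution-POPUC-sine}, with a prefactor independent of $j$. The one subtlety is bookkeeping the half-angle branches. Different lifts of $\theta_j$ or $\varphi_k$ by $2\pi$ change signs of individual sines, but since the lifts are fixed globally, each change only multiplies the whole vector by a $j$-independent sign, or is absorbed consistently by the exponential factor. This is the step I expect to need the most care. I would verify it by checking that the identity above is exact for the fixed lifts, rather than reasoning modulo sign.

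For the spanning claim I would argue exactly as in Lemma~\ref{lem:circuit-solutions}. Any $m-1$ columns of $\mathbf V$ are independent and $\operatorname{rank}\mathbf V=m-1$, so the circuits of the column matroid are precisely the $m$-subsets, each supporting a unique kernel vector up to scaling. By circuit decomposition, or more concretely by fixing $\mathcal J_0=\{1,\dots,m-1\}$ and using the $n-m+1$ circuits $\mathcal J_0\cup\{\ell\}$, $\ell\ge m$, whose $\ell$-th coordinates exhibit $n-m+1=\dim\ker\mathbf A$ independent vectors, these circuits span $\ker\mathbf V$. Applying $\mathbf D^{-1}$ and rescaling by the $c_{\mathcal J}$ then spans $\ker\mathbf A$. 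Finally, since the spanning vectors $\boldsymbol\omega^{(\mathcal J)}$ are real, any real $\boldsymbol\omega=\sum_{\mathcal J} c'_{\mathcal J}\boldsymbol\omega^{(\mathcal J)}\in\ker\mathbf A$ equals its real part $\sum_{\mathcal J}(\operatorname{Re}c'_{\mathcal J})\boldsymbol\omega^{(\mathcal J)}$. Hence $\ker\mathbf A\cap\mathbb R^n$ is the real span of the same family.
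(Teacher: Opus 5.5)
Your proposal is correct and follows essentially the same route as the paper. Both arguments reduce to $\ker\mathbf V$ via $\mathbf A=\mathbf V\mathbf D$, take the barycentric circuit vectors, and convert $\zeta_j^{1-m}P_m(\zeta_j)Q_{\mathcal J}'(\zeta_j)$ into the sine product with the half-angle identity, where the $\theta_j$-exponent cancels so the prefactor depends only on $\mathcal J$. Two steps are more self-contained than in the paper: your spanning argument (the $n-m+1$ independent circuits $\mathcal J_0\cup\{\ell\}$ plus the dimension count) replaces the paper's appeal to matroid circuit decomposition, and you give the real-part step for $\ker\mathbf A\cap\mathbb R^n$ explicitly. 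The branch concern you flag is not a real issue, because the identity is exact for any fixed real arguments.
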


\begin{proof}
Since \(\mathbf A=\mathbf V\mathbf D\) (Lemma~\ref{lem:vandermonde-reduction-POPUC}) with \(\mathbf D\) invertible,
it suffices to describe \(\ker\mathbf V\). The standard Vandermonde argument gives, for each
\(|\mathcal{J}|=m\), a nonzero vector \(\boldsymbol{\lambda}^{(\mathcal{J})}\in\ker\mathbf V\) supported on \(\mathcal{J}\) with
\[
\lambda^{(\mathcal{J})}_j=
\begin{cases}
\dfrac{1}{Q_\mathcal{J}'(\zeta_j)}, & j\in \mathcal{J},\\[7pt]
0, & j\notin \mathcal{J},
\end{cases}
\]
and these circuit vectors span \(\ker\mathbf V\); see \cite[Ch.~2, \S2.1]{StoerBulirsch} for the barycentric
weights and \cite[Ch.~1, \S1.1]{OxleyMatroid} for the circuit-decomposition principle. Applying \(\mathbf D^{-1}\),
we obtain a spanning family of \(\ker\mathbf A\) given (up to a nonzero scalar factor) by
\[
\widetilde\omega^{(\mathcal{J})}_j=
\begin{cases}
\displaystyle
\frac{\zeta_j^{\,m-1}}{P_m(\zeta_j)\,Q_\mathcal{J}'(\zeta_j)}, & j\in \mathcal{J},\\[7pt]
0, & j\notin \mathcal{J}.
\end{cases}
\]
We now rewrite \(\widetilde\omega^{(\mathcal{J})}\) in terms of half-angle sines. For arbitrary $\zeta=\exp{i\theta}$ and $\xi = \exp{i\varphi}$, consider the identity
\begin{equation*}
    \begin{split}
        \zeta-\xi &= 2i\,\exp\left({i\dfrac{\theta+\varphi}{2}}\right)\sin\!\left(\frac{\theta-\varphi}{2}\right).
    \end{split}
\end{equation*}
From that, obtain
\[
P_m(\zeta_j)
=(2i)^m \exp\left({\dfrac{i}{2}\left(m\theta_j+\sum_{k=1}^m\varphi_k\right)}\right)
\prod_{k=1}^m \sin\!\left(\frac{\theta_j-\varphi_k}{2}\right),
\]
together with
\[
Q_\mathcal{J}'(\zeta_j)
=(2i)^{m-1} \exp\left({\dfrac{i}{2}\left((m-2)\theta_j+\sum_{r = 1}^m\theta_{j_r}\right)}\right)
\prod_{r \neq s}^m \sin\!\left(\frac{\theta_j-\theta_{j_r}}{2}\right),
\]
whenever $j=j_s \in \mathcal{J}$. Then, it is direct to check that $\omega_j^{(\mathcal{J})}=C_{\mathcal{J}}\widetilde \omega^{(\mathcal{J})}_j$ where
\[
C_{\mathcal{J}}=(2i)^{1-2m}\exp\left(-\dfrac{i}{2}\left(\sum^{m}_{k=0}\varphi_k+\theta_{j_k}\right)\right),
\]
which is unimodular and depends only on \(\mathcal{J}\) and not on the specific index $j$. Therefore \(\widetilde{\boldsymbol{\omega}}^{(\mathcal{J})}\) differs from
\(\boldsymbol{\omega}^{(\mathcal{J})}\) in \eqref{eq:circuit-solution-POPUC-sine} by multiplication with a nonzero complex scalar,
and hence \(\boldsymbol{\omega}^{(\mathcal{J})}\in\ker\mathbf A\). The spanning statement follows from the spanning property of
the circuit family before rescaling. Since the spanning family in \eqref{eq:circuit-solution-POPUC-sine} is real, it
also spans \(\ker\mathbf A\cap\mathbb R^n\) over \(\mathbb R\).
\end{proof}

We now exploit strict interlacing to describe the nonnegative and strictly positive solutions of
\eqref{eq:vandermondePOPUC}. Fix an initial point $\varphi_1$ and represent all arguments on the interval
$(\varphi_1,\varphi_1+2\pi)$, that is, we keep
\[
\varphi_1<\varphi_2<\cdots<\varphi_m<\varphi_{m+1}=\varphi_1+2\pi
\]
and choose the arguments $\theta_1,\dots,\theta_n$ so that
\[
\varphi_1<\theta_1<\cdots<\theta_n<\varphi_1+2\pi.
\]
In the sine representation \eqref{eq:circuit-solution-POPUC-sine} this is only a choice of arguments for points on
$\mathbb{S}^1$, and it does not affect the sign pattern of the real circuit vectors. Under strict interlacing, the open arcs
$(\varphi_r,\varphi_{r+1})$ partition the nodes $\{\zeta_j\}_{j=1}^n$ into $m$ bands, and for a circuit supported on one index
per band the two sine products in \eqref{eq:circuit-solution-POPUC-sine} alternate in sign in the same way; hence their
product has constant sign on the support and the circuit can be scaled to be entrywise nonnegative.
\begin{eje}[Bands on $\mathbb{S}^1$ under strict interlacing]\label{ex:bands-popuc}
Take $n=7$, $m=3$ and let
\[
Z_n=\{\zeta_j=e^{i\theta_j}\}_{j=1}^7,\quad
Z_m=\{\xi_k=e^{i\varphi_k}\}_{k=1}^3,
\]
with the normalization $\varphi_1<\theta_1<\cdots<\theta_7<\varphi_1+2\pi$. For instance, choose
\[
(\theta_1,\dots,\theta_7)=\Bigl(\frac{\pi}{6},\frac{\pi}{3},\frac{\pi}{2},\frac{2\pi}{3},\frac{5\pi}{6},\frac{7\pi}{6},\frac{3\pi}{2}\Bigr),
\quad
(\varphi_1,\varphi_2,\varphi_3)=\Bigl(\frac{\pi}{12},\frac{7\pi}{12},\frac{5\pi}{4}\Bigr),
\] \vspace{3pt}
so $\varphi_4=\varphi_1+2\pi=\dfrac{25\pi}{12}$. Then $Z_m$ strictly interlaces $Z_n$ and we may take
\[
i_1=1,\quad i_2=4,\quad i_3=6,
\]
since
\[
\theta_{i_1}<\varphi_1<\theta_{i_1+1},\quad
\theta_{i_2}<\varphi_2<\theta_{i_2+1},\quad
\theta_{i_3}<\varphi_3<\theta_{i_3+1}.
\]
Equivalently, the bands determined by the arcs $(\varphi_r,\varphi_{r+1})$ are
\[
I_1=\{j:\ \varphi_1<\theta_j<\varphi_2\}=\{1,2,3\},\quad
I_2=\{j:\ \varphi_2<\theta_j<\varphi_3\}=\{4,5\},
\]
\[
I_3=\{j:\ \varphi_3<\theta_j<\varphi_4\}=\{6,7\},
\]
and $\{1,\dots,7\}=I_1\bigsqcup I_2\bigsqcup I_3$. If $\mathcal{J}\subset\{1,\dots,7\}$ contains exactly one index from each
band, the denominator in \eqref{eq:circuit-solution-POPUC-sine} has constant sign on $\mathcal{J}$, so the global sign of the circuit vector $\boldsymbol{\omega}^{(\mathcal{J})}$ can be chosen so that $\boldsymbol{\omega}^{(\mathcal{J})}\ge0$.
\end{eje}

We leverage this interlacing structure to characterize the strictly positive solutions of the Vandermonde system.

\begin{lemma}\label{lem:positive-solutions-POPUC}
Assume the hypotheses and notation of Lemma~\ref{lem:circuit-solutions-POPUC}. Assume, in addition, that $Z_m$ strictly
interlaces $Z_n$ in the sense of Definition~\ref{def:strict-interlacing-POPUC}. Fix arguments
$0 \le \varphi_1<\cdots<\varphi_m<\varphi_1+2\pi$ and represent all $\theta_j$ on $(\varphi_1,\varphi_1+2\pi)$ so that
\[
\varphi_1<\theta_1<\cdots<\theta_n<\varphi_1+2\pi,
\quad
\varphi_{m+1}=\varphi_1+2\pi.
\]
Define the bands
\[
I_r=\{j\in\{1,\dots,n\}:\ \varphi_r<\theta_j<\varphi_{r+1}\},\quad r=1,\dots,m,
\]
so that $\{1,\dots,n\}=\bigsqcup_{r=1}^m I_r$. Define
\[
\mathfrak{J}_{+}
=
\Bigl\{\mathcal{J}\subset\{1,\dots,n\}:\ |\mathcal{J}|=m,\ |\mathcal{J}\cap I_r|=1\ \text{for all }r=1,\dots,m\Bigr\}.
\]
For each $\mathcal{J}\in\mathfrak{J}_{+}$, let $\boldsymbol{\omega}^{(\mathcal{J})}$ be the circuit vector given by
\eqref{eq:circuit-solution-POPUC-sine}. Then, $\boldsymbol{\omega}^{(\mathcal{J})}\ge0$.
Moreover, the following hold:
\begin{enumerate}
\item[(a)] Define
\[
\mathcal{W}_{\ge 0}(Z_n,Z_m)
=
\Bigl\{\boldsymbol{\omega}\in\mathbb{R}^n:\mathbf{A}\boldsymbol{\omega}=\mathbf{0}
\ \text{and}\ \omega_j\ge 0\ \text{for all }j\Bigr\}.
\]
Then
\[
\mathcal{W}_{\ge 0}(Z_n,Z_m)
=
\operatorname{cone}\bigl\{\boldsymbol{\omega}^{(\mathcal{J})}:\ \mathcal{J}\in\mathfrak{J}_{+}\bigr\}.
\]
\item[(b)] One has
\[
\mathcal{W}_{>0}(Z_n,Z_m)
=
\mathcal{W}_{\ge0}(Z_n,Z_m)\cap \mathbb{R}^n_{>0}
=
\operatorname{relint}\bigl(\mathcal{W}_{\ge 0}(Z_n,Z_m)\bigr),
\]
where $\operatorname{relint}$ is taken in $\ker\mathbf{A}\cap\mathbb{R}^n$. Furthermore,
$\boldsymbol{\omega}\in\mathcal{W}_{>0}(Z_n,Z_m)$ if and only if
\[
\boldsymbol{\omega}=\sum_{\mathcal{J}\in\mathfrak{J}_{+}} t_{\mathcal{J}}\,\boldsymbol{\omega}^{(\mathcal{J})},
\quad t_{\mathcal{J}}\ge0,
\]
and, for every $j\in\{1,\dots,n\}$, there exists at least one $\mathcal{J}\in\mathfrak{J}_{+}$ such that
$j\in \mathcal{J}$ and $t_{\mathcal{J}}>0$. In particular, $\mathcal{W}_{>0}(Z_n,Z_m)\neq\emptyset$.
\end{enumerate}
\end{lemma}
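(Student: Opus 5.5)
I will follow the proof of Lemma~\ref{lem:positive-cone} step by step, replacing the real-line sign computation with a half-angle sine computation on the normalized arguments $\varphi_1<\theta_1<\dots<\theta_n<\varphi_1+2\pi$.

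\textbf{Step 1: each $\boldsymbol{\omega}^{(\mathcal{J})}$ with $\mathcal{J}\in\mathfrak{J}_+$ has constant sign.} Write $\mathcal{J}=\{j_1<\dots<j_m\}$ with $j_s\in I_s$. All half-angles $(\theta_j-\varphi_k)/2$ and $(\theta_j-\theta_{j_r})/2$ lie in $(-\pi,\pi)$, so the sign of each sine equals the sign of the angle difference. For $j=j_s\in I_s$:
\begin{itemize}
\item the product $\prod_k\sin\bigl((\theta_j-\varphi_k)/2\bigr)$ has exactly $m-s$ negative factors, namely $k=s+1,\dots,m$;
\item the product $\prod_{r\ne s}\sin\bigl((\theta_j-\theta_{j_r})/2\bigr)$ also has exactly $m-s$ negative factors, namely $r>s$.
\end{itemize}
The denominator in \eqref{eq:circuit-solution-POPUC-sine} therefore has sign $(-1)^{2(m-s)}=+1$. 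Hence $\boldsymbol{\omega}^{(\mathcal{J})}\ge0$, with strictly positive entries on $\mathcal{J}$. Nonemptiness of $\mathfrak{J}_+$ needs every band to be nonempty. This follows from strict interlacing: between two consecutive $\varphi_r$ there must lie a $\theta_j$, for otherwise two points of $Z_m$ would lie in one arc between consecutive nodes of $Z_n$.

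\textbf{Step 2: part (a).} The inclusion $\operatorname{cone}\subset\mathcal{W}_{\ge0}$ follows from Step~1 and Lemma~\ref{lem:circuit-solutions-POPUC}. For the reverse inclusion, I will argue as in Lemma~\ref{lem:positive-cone}. The set $\mathcal{W}_{\ge0}$ is a pointed polyhedral cone in the real space $\ker\mathbf{A}\cap\mathbb{R}^n$, so it is generated by its extreme rays. Each extreme ray has inclusion-minimal support, and is thus a real multiple of a circuit $\boldsymbol{\omega}^{(\mathcal{J})}$ with $|\mathcal{J}|=m$. This uses that any $m-1$ columns of $\mathbf{A}$ are independent, and that a real vector with minimal support in the complex kernel is a multiple of the circuit.

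It remains to show that a circuit with mixed band structure has mixed signs. Suppose $\mathcal{J}$ misses some band. Since $|\mathcal{J}|=m$ and there are $m$ bands, $\mathcal{J}$ must then place two consecutive elements $j_s<j_{s+1}$ of the support in the same band $I_r$. I then compare the sign formulas at $j_s$ and $j_{s+1}$:
\begin{itemize}
\item the $\varphi$-product has the same sign at both points, since no $\varphi_k$ lies between $\theta_{j_s}$ and $\theta_{j_{s+1}}$;
\item the $\theta$-product flips sign, since exactly one factor changes sign.
\end{itemize}
So the entries of the circuit at $j_s$ and $j_{s+1}$ have opposite signs. The main obstacle I expect is precisely this step: the real-versus-complex minimal-support argument and this sign-flip check are where the circle case differs from the line. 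I would handle the first by noting that minimal supports in $\ker\mathbf{V}$ are exactly the $m$-subsets, and that the kernel restricted to such a support is one-dimensional.

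\textbf{Step 3: part (b).} This is verbatim the argument of Lemma~\ref{lem:positive-cone}(b). Every index $j$ lies in some $\mathcal{J}\in\mathfrak{J}_+$: take $j$ in its own band and an arbitrary element of each other (nonempty) band. Hence no coordinate inequality is redundant, and the relative interior of $\mathcal{W}_{\ge0}$ consists of its strictly positive points. The characterization by coefficients $t_{\mathcal{J}}$ follows from componentwise nonnegativity of the generators. Finally, $\sum_j\boldsymbol{\omega}^{(\mathcal{J}_j)}$, where $\mathcal{J}_j\in\mathfrak{J}_+$ is chosen to contain $j$, is a strictly positive element, so $\mathcal{W}_{>0}(Z_n,Z_m)\neq\emptyset$.
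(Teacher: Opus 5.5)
Your proposal is correct and follows essentially the same route as the paper. The normalization $\varphi_1<\theta_1<\dots<\theta_n<\varphi_1+2\pi$ makes the sign of each half-angle sine equal to the sign of the corresponding angle difference, so both sine products have sign $(-1)^{m-s}$ at $j_s\in I_s$; parts (a) and (b) then transfer from the real-line lemma through the extreme rays of the pointed cone in $\ker\mathbf{A}\cap\mathbb{R}^n$. Your extra checks are details the paper leaves implicit: nonemptiness of the bands, the real-versus-complex minimal-support point, and the explicit sign flip between consecutive support elements lying in a common band.
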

\begin{proof}
Fix $\mathcal{J}\in\mathfrak{J}_{+}$ and write $\mathcal{J}=\{j_1,\dots,j_m\}$ with $j_r\in I_r$, so that
\[
\varphi_r<\theta_{j_r}<\varphi_{r+1},\quad r=1,\dots,m,
\quad
\theta_{j_1}<\cdots<\theta_{j_m}.
\]
By the chosen normalization, all angles lie in an interval of length $2\pi$, hence
$\theta_{j_r}-\varphi_k\in(-2\pi,2\pi)$ and $\theta_{j_r}-\theta_{j_s}\in(-2\pi,2\pi)$ for all $r,s,k$.
Since $(\theta_{j_r}-\varphi_k)/2,(\theta_{j_r}-\theta_{j_s})/2\in(-\pi,\pi)$ and none of these differences is $0$,
we have
\[
\operatorname{sgn}\!\left(\sin\left(\dfrac{\theta_{j_r}-\varphi_k}{2}\right)\right)=\operatorname{sgn}(\theta_{j_r}-\varphi_k),
\quad
\operatorname{sgn}\!\left(\sin\left(\dfrac{\theta_{j_r}-\theta_{j_s}}{2}\right)\right)=\operatorname{sgn}(\theta_{j_r}-\theta_{j_s}).
\]
Defining
\[
P_m(\theta)=\prod_{k=1}^m(\theta-\varphi_k),
\quad
Q_{\mathcal{J}}(\theta)=\prod_{s=1}^m(\theta-\theta_{j_s}),
\]
it follows from \eqref{eq:circuit-solution-POPUC-sine} that, for each $r=1,\dots,m$,
\[
\operatorname{sgn}\!\left(\omega^{(\mathcal{J})}_{j_r}\right)
=
\operatorname{sgn}\!\left(P_m(\theta_{j_r})\,Q_{\mathcal{J}}'(\theta_{j_r})\right).
\]
Now $\theta_{j_r}\in(\varphi_r,\varphi_{r+1})$ implies that exactly $m-r$ factors in $P_m(\theta_{j_r})$ are negative, hence
\[
\operatorname{sgn}\!\left(P_m(\theta_{j_r})\right)=(-1)^{m-r}.
\]
Moreover,
\[
Q_{\mathcal{J}}'(\theta_{j_r})=\prod_{\substack{s=1\\ s\neq r}}^{m}(\theta_{j_r}-\theta_{j_s}),
\]
and since $\theta_{j_1}<\cdots<\theta_{j_m}$, exactly $m-r$ factors in this product are negative (those with $s>r$), so
\[
\operatorname{sgn}\!\left(Q_{\mathcal{J}}'(\theta_{j_r})\right)=(-1)^{m-r}.
\]
Therefore
\[
\operatorname{sgn}\!\left(P_m(\theta_{j_r})\,Q_{\mathcal{J}}'(\theta_{j_r})\right)=+1
\quad\text{for all }r,
\]
so the nonzero entries of $\boldsymbol{\omega}^{(\mathcal{J})}$ all are positive, as claimed.

The proofs of (a) and (b) are identical to the OPRL case (Lemma~\ref{lem:positive-cone}) once one observes that
$\ker\mathbf{A}\cap\mathbb{R}^n$ is a real linear subspace and $\mathcal{W}_{\ge0}(Z_n,Z_m)$ is a polyhedral cone in that
subspace, with extreme rays generated by the minimal-support nonnegative circuit vectors singled out above.
\end{proof}

We can now combine the quadrature reformulation (Lemma~\ref{lemma:POPUClemma}),
the description of \(\ker\mathbf{A}\cap\mathbb R^n\) (Lemma~\ref{lem:circuit-solutions-POPUC}),
and the positivity analysis under strict interlacing (Lemma~\ref{lem:positive-solutions-POPUC})
to solve Problem~\ref{prob:underdetermined-POPUC}.

\begin{theorem}\label{thm:POPUC-solution}
Assume the notation and hypotheses of Problem~\ref{prob:underdetermined-POPUC}.
Then it admits a solution if and only if \(Z_m\) strictly interlaces \(Z_n\) in the sense of
\ref{def:strict-interlacing-POPUC}. In this case the solution is not unique. More precisely, fix any \(\boldsymbol{\omega}\in\mathcal{W}_{>0}(Z_n,Z_m)\) as in
Lemma~\ref{lem:positive-solutions-POPUC} and consider the discrete measure
\[
\nu=\sum_{j=1}^n \omega_j\,\delta_{\zeta_j}.
\]
Then, the truncated trigonometric moments
\[
\mu_k=\int z^k\,d\nu(z)=\sum_{j=1}^n \omega_j\,\zeta_j^{\,k},
\quad k=-(n-1),\dots,n-1,
\]
determine the Verblunsky coefficients $\alpha_0,\dots,\alpha_{n-2}$.
In addition, the unimodular parameters $b_n$ and $b_m$ are fixed by the prescribed zero sets through the identities in
Lemma~\ref{lemma:POPUClemma}. Hence,
\[
\sigma\bigl(\mathcal{C}(\alpha_0,\dots,\alpha_{n-2},b_n)\bigr)=Z_n,\quad
\sigma\bigl(\mathcal{C}(\alpha_0,\dots,\alpha_{m-2},b_m)\bigr)=Z_m.
\]
Conversely, every solution of Problem~\ref{prob:underdetermined-POPUC} arises from an specific choice of weights
\(\boldsymbol{\omega}\in\mathcal{W}_{>0}(Z_n,Z_m)\) through the quadrature construction in Lemma~\ref{lemma:POPUClemma}
and the positivity mechanism in Lemma~\ref{lem:positive-solutions-POPUC}.
\end{theorem}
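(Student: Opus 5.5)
The plan mirrors the proof of Theorem~\ref{thm:OPRL-solution}, with Lemma~\ref{lemma:POPUClemma} in place of Lemma~\ref{lemma:OPRLlemma} and Lemma~\ref{lem:positive-solutions-POPUC} in place of Lemma~\ref{lem:positive-cone}. We treat necessity, sufficiency, the description of the solution set, and non-uniqueness, in that order.

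\emph{Necessity.} Suppose parameters $\alpha_0,\dots,\alpha_{n-2}\in\mathbb D$ and $b_n,b_m\in\mathbb S^1$ realise $Z_n$ and $Z_m$.
\begin{enumerate}
\item[(1)] By Lemma~\ref{lemma:POPUClemma}, (ii)$\Rightarrow$(i), there is a measure $\nu$ with $\Psi_n=\Psi_n(\cdot;\nu,b_n)$ and $\Psi_m=\Psi_m(\cdot;\nu,b_m)$.
\item[(2)] We then invoke the separation property of POPUC zeros recalled in Section~\ref{sec:intro} (see \cite[Ch.~2]{S05I} and \cite{CP18}). Zeros of paraorthogonal polynomials of degrees $m<n$ built from the same measure cannot cluster: each open arc between consecutive points of $Z_n$ contains at most one point of $Z_m$.
\item[(3)] We expect this to be the main obstacle. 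The clean interlacing results in the literature are stated for consecutive degrees, or for rank-one perturbations of principal submatrices of $\mathcal C_n$.
\end{enumerate}
To handle step (3), I would first try an argument through the quadrature weights. By (i)$\Rightarrow$(iii) of Lemma~\ref{lemma:POPUClemma}, there are $\omega_j>0$ with $\sum_j\omega_j\Psi_m(\zeta_j)\zeta_j^{-k}=0$ for $k=1,\dots,m-1$. Suppose some arc between consecutive nodes of $Z_n$ contained two points of $Z_m$. Then one could build a suitable real test Laurent polynomial from $\Psi_m$ and a half-angle product, as in Lemma~\ref{lem:circuit-solutions-POPUC}. It would be nonnegative on $Z_n$ and not identically zero there, which contradicts the orthogonality relations. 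This is the circle analogue of the classical ``at most one zero of $P_m$ between consecutive zeros of $P_n$'' argument. If this becomes too delicate, I would instead cite the rank-one perturbation interlacing of \cite{CP18} directly.

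\emph{Sufficiency.} Assume strict interlacing.
\begin{enumerate}
\item[(1)] Lemma~\ref{lem:positive-solutions-POPUC}(b) gives $\mathcal W_{>0}(Z_n,Z_m)\neq\emptyset$. Pick $\boldsymbol\omega$ in it and set $\nu=\sum_j\omega_j\delta_{\zeta_j}$.
\item[(2)] Since $\nu$ has $n$ support points, its moments $\mu_k$ for $|k|\le n-1$ give a positive definite Toeplitz matrix of order $n$. Szeg\H{o}'s recursion (equivalently Gram--Schmidt on $1,\dots,z^{n-1}$) then produces $\Phi_0,\dots,\Phi_{n-1}$ and hence $\alpha_0,\dots,\alpha_{n-2}\in\mathbb D$, which depend only on these moments.
\item[(3)] Apply (iii)$\Rightarrow$(i)$\Rightarrow$(ii) of Lemma~\ref{lemma:POPUClemma}, with $b_\ell=-\overline{\Psi_\ell(0)}$, to obtain the two spectral identities. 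One should check that $b_\ell\in\mathbb S^1$; this holds because $|\Psi_\ell(0)|=\prod|\zeta_j|=1$.
\end{enumerate}

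\emph{Converse description.} Any solution yields, through Szeg\H{o} quadrature \eqref{eq:szego-quad} and (i)$\Rightarrow$(iii) of Lemma~\ref{lemma:POPUClemma}, a vector $\boldsymbol\omega\in\mathcal W_{>0}(Z_n,Z_m)$ whose moments reproduce the given $\alpha_k$. This vector lies in the relative interior described by Lemma~\ref{lem:positive-solutions-POPUC}.

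\emph{Non-uniqueness.} By Lemma~\ref{lem:vandermonde-reduction-POPUC}, the real solution space has dimension at least $n-m+1\ge2$, so $\mathcal W_{>0}$ is an open cone of dimension at least $2$ in that space. Fix the total mass $\sum_j\omega_j$; this normalisation affects no Verblunsky coefficient. Two non-proportional positive weight vectors then give different measures $\nu$. These produce different $\alpha$'s, because the map $(\alpha_0,\dots,\alpha_{n-2},b_n)$ determines the $n$ Szeg\H{o} weights uniquely through \eqref{eq:szego-quad}. Hence the solution is not unique.
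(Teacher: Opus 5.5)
Your proposal is correct. For sufficiency and for the converse description it follows the paper's structure: a positive $\boldsymbol\omega$ from Lemma~\ref{lem:positive-solutions-POPUC}, then Lemma~\ref{lemma:POPUClemma}, and Szeg\H{o} quadrature in the reverse direction. It departs from the paper in the two places where the paper's proof is thinnest.

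\textbf{Necessity.} The paper only appeals to ``the standard POPUC interlacing theorem''. You instead derive interlacing directly from (iii), and your sketch works. It can be made explicit as follows. Suppose $\xi_a=e^{i\varphi_a}$ and $\xi_b=e^{i\varphi_b}$ in $Z_m$ lie in one gap of $Z_n$, labelled so that the counterclockwise arc from $\xi_a$ to $\xi_b$ contains no $\zeta_j$. Set $g=\Psi_m/((z-\xi_a)(z-\xi_b))$ and $h=c\,z\,g\in\mathrm{span}\{z,\dots,z^{m-1}\}$, with $\overline{c}=-e^{-i(\varphi_a+\varphi_b)/2}$. Then
$\Psi_m(e^{i\theta})\overline{h(e^{i\theta})}=4|g(e^{i\theta})|^2\sin\frac{\theta-\varphi_a}{2}\sin\frac{\theta-\varphi_b}{2}$.
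Take $\varphi_a<\varphi_b<\varphi_a+2\pi$ and $\theta\in(\varphi_a,\varphi_a+2\pi)$. The right-hand side is then strictly positive at every $\zeta_j$, because each $\theta_j>\varphi_b$ and $g$ has no zeros on $Z_n$ (since $Z_n\cap Z_m=\emptyset$). On the other hand, (iii) gives $\sum_j\omega_j\Psi_m(\zeta_j)\overline{h(\zeta_j)}=0$, which contradicts $\omega_j>0$. This gives a self-contained necessity proof valid for every $m<n$, rather than relying on a citation whose nonconsecutive form is not spelled out.

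\textbf{Non-uniqueness.} The paper asserts this without argument, and your injectivity step is right: normalized moments of order $\le n-1$ determine $\boldsymbol\omega$ through the invertible $n\times n$ Vandermonde matrix. However, the dimension claim is misattributed. Lemma~\ref{lem:vandermonde-reduction-POPUC} gives the \emph{complex} kernel dimension $n-m+1$, which only bounds $\dim_{\mathbb R}(\ker\mathbf A\cap\mathbb R^n)$ from above. To get ``at least'' you need the real spanning family of Lemma~\ref{lem:circuit-solutions-POPUC}.

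A simpler fix avoids dimensions entirely. Take $\boldsymbol\omega=\sum_{\mathcal J\in\mathfrak J_+}\boldsymbol\omega^{(\mathcal J)}$, which has full support, and $\boldsymbol\omega+\boldsymbol\omega^{(\mathcal J_1)}$ for any $\mathcal J_1\in\mathfrak J_+$. The added term has support of size $m<n$, so these are non-proportional elements of $\mathcal W_{>0}(Z_n,Z_m)$. This also covers $m=n-1$, where, unlike on $\mathbb R$, a one-parameter family of solutions remains after normalization (compare Example~\ref{ex:popuc}).
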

\begin{proof}
Necessity follows from the standard POPUC interlacing theorem on $\mathbb S^1$ (see Lemma~\ref{lemma:POPUClemma}), which forces $Z_m$ to strictly interlace $Z_n$ whenever a solution exists.  
Conversely, if $Z_m$ strictly interlaces $Z_n$, then Lemma~\ref{lem:positive-solutions-POPUC} gives $\mathcal{W}_{>0}(Z_n,Z_m)\neq\emptyset$, hence there exists $\boldsymbol{\omega}>0$ solving \eqref{eq:vandermondePOPUC}.  
Given such $\boldsymbol{\omega}$, the quadrature construction in Lemma~\ref{lemma:POPUClemma} produces the truncated moments $$\mu_k=\sum_{j=1}^n\omega_j\zeta_j^{\,k}, \quad k=-(n-1),\dots,n-1,$$ and therefore, Verblunsky coefficients $\alpha_0, \dots, \alpha_{n-2}$ together with a $\mathcal{C}$ matrix with spectrum $Z_n$ and the corresponding truncation with spectrum $Z_m$, with $b_n,b_m$ fixed by the prescribed zeros as in Lemma~\ref{lemma:POPUClemma}.  
Finally, the converse parametrization is immediate from Lemma~2.2, since every solution determines the same truncated moments and hence arises from some choice of $\boldsymbol{\omega}\in\mathcal{W}_{>0}(Z_n,Z_m)$ via the same quadrature procedure.
\end{proof}

As a consequence of Theorem~\ref{thm:POPUC-solution}, we obtain the following unit-circle analogue of the generalized
Wendroff statement.

\begin{coro}[Generalized Wendroff theorem on $\mathbb{S}^1$]\label{cor:gen-Wendroff-POPUC}
Fix integers \(1\le m<n\) and let
\[
Z_n=\{\zeta_j\}_{j=1}^{n},\quad Z_m=\{\xi_k\}_{k=1}^{m}
\]
be sets of distinct points on \(\mathbb S^1\). Then, there exists a positive Borel measure \(\nu\) on \(\mathbb S^1\), and parameters \(b_n,b_m\in\mathbb S^1\) such that the associated POPUC satisfy
\[
\text{zeros}\left(\Psi_n(z;\nu,b_n)\right)=Z_n,\quad
\text{zeros}\left(\Psi_m(z;\nu,b_m)\right)=Z_m,
\]
if and only if \(Z_m\) strictly interlaces \(Z_n\) in the sense of \ref{def:strict-interlacing-POPUC}.
\end{coro}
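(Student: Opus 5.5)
The corollary is the unit-circle counterpart of Corollary~\ref{cor:gen-Wendroff}, and I would obtain it by combining Theorem~\ref{thm:POPUC-solution} with Lemma~\ref{lemma:POPUClemma}, mirroring the real-line argument.

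\emph{Sufficiency.} Assume $Z_m$ strictly interlaces $Z_n$.
\begin{enumerate}
\item[(1)] By Lemma~\ref{lem:positive-solutions-POPUC}, the set $\mathcal{W}_{>0}(Z_n,Z_m)$ is nonempty, so pick a weight vector $\boldsymbol{\omega}>0$ in it.
\item[(2)] Form the discrete measure $\nu=\sum_j\omega_j\delta_{\zeta_j}$. It is a positive Borel measure on $\mathbb S^1$ with $n$ support points, so its moments of orders $-(n-1),\dots,n-1$ are those of a positive definite functional.
\item[(3)] Apply the implication (iii)$\Rightarrow$(i) of Lemma~\ref{lemma:POPUClemma} with $b_n=-\overline{\Psi_n(0)}$ and $b_m=-\overline{\Psi_m(0)}$. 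This gives $\Psi_n=\Psi_n(\cdot;\nu,b_n)$ and $\Psi_m=\Psi_m(\cdot;\nu,b_m)$, and hence the required zero sets.
\end{enumerate}
One point must be checked: $b_n$ and $b_m$ are genuinely unimodular. Since $\Psi_\ell$ is monic with all zeros on $\mathbb S^1$, we have $|\Psi_\ell(0)|=\prod|\zeta_j|=1$. The OPUC of $\nu$ are well defined up to degree $n-1$ because $\nu$ has $n$ support points, and this is all that the POPUC of degree $n$ requires. If a measure with infinite support is preferred, I would perturb $\nu$ so that its moments up to order $n-1$ are preserved, as in the proof of Lemma~\ref{lemma:POPUClemma}, and then continue the Verblunsky coefficients arbitrarily in $\mathbb D$ (Verblunsky's theorem, the analogue of Favard's theorem).

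\emph{Necessity.} Assume such $\nu,b_n,b_m$ exist.
\begin{enumerate}
\item[(1)] By the implication (i)$\Rightarrow$(iii) of Lemma~\ref{lemma:POPUClemma}, Szeg\H{o} quadrature gives positive weights solving \eqref{eq:vandermondePOPUC}.
\item[(2)] Equivalently, by the (i)$\Leftrightarrow$(ii) correspondence, the two sets are spectra of the matrices $\mathcal C(\alpha_0,\dots,\alpha_{n-2},b_n)$ and $\mathcal C(\alpha_0,\dots,\alpha_{m-2},b_m)$ with common leading Verblunsky coefficients.
\item[(3)] The necessity part of Theorem~\ref{thm:POPUC-solution}, that is, the POPUC interlacing theorem for such nested pairs (cf.\ \cite{CP18}), then forces strict interlacing.
\end{enumerate}

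Disjointness $Z_n\cap Z_m=\emptyset$ is a hypothesis of Definition~\ref{def:strict-interlacing-POPUC}. It must also be derived in the necessity direction, or assumed, as the paper does throughout this section.

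The main obstacle is necessity: the paper asserts it by citation rather than proving it from the quadrature system. If a self-contained argument is wanted, I would try to show directly that a positive solution $\boldsymbol{\omega}$ of \eqref{eq:vandermondePOPUC} is impossible when some arc between consecutive points of $Z_m$ contains no point of $Z_n$. The idea is to construct a real-valued Laurent polynomial $L$ in $\mathrm{span}\{z^{-(m-1)},\dots,z^{m-1}\}$, built from $\Psi_m$ times a suitable factor, whose values $\Psi_m(\zeta_j)\,\overline{q(\zeta_j)}$ have constant sign on $Z_n$. Pairing $L$ with $\boldsymbol{\omega}$ would then give a nonzero sum, contradicting orthogonality. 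This is the unit-circle version of the classical sign-change argument, in which an orthogonal polynomial must change sign between nodes carrying positive weights.
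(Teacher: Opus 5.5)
Your proposal is correct and is essentially the paper's own proof written out: sufficiency comes from Lemma~\ref{lem:positive-solutions-POPUC} together with (iii)$\Rightarrow$(i) of Lemma~\ref{lemma:POPUClemma}, with Verblunsky's theorem needed only if you want an infinitely supported $\nu$; necessity comes from the interlacing theorem cited in Theorem~\ref{thm:POPUC-solution}. Your optional direct necessity argument works with the choice $p(z)=z\,\Psi_m(z)/\bigl((z-\xi_r)(z-\xi_{r+1})\bigr)$, because $\overline{p}\in\mathrm{span}\{z^{-1},\dots,z^{-(m-1)}\}$ on $\mathbb{S}^1$ and $\Psi_m\overline{p}=|p|^2\,\overline{z}\,(z-\xi_r)(z-\xi_{r+1})$ has fixed sign, up to a unimodular constant, off the arc from $\xi_r$ to $\xi_{r+1}$; disjointness, however, cannot be derived in the necessity direction (POPUC of different degrees can share zeros), so it must be assumed.
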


\begin{proof}
{  This follows from Theorem~\ref{thm:POPUC-solution} and Lemma~\ref{lemma:POPUClemma}, together with Verblunsky's theorem \cite{S05I}.}
\end{proof}

\section{Numerical algorithms and examples} \label{sec:algos}
Now we can present Algorithm~\ref{alg:oprl-reconstruction}, which formalises the reconstruction procedure underlying the results above. Among the steps in the above procedure, Step~4 is the only genuinely nontrivial one.
Indeed, constructing all strictly positive weights amounts to selecting the admissible conical
combinations of the nonnegative circuit vectors from \eqref{eq:circuit-solution} supported on
$\mathfrak{J}_{+}$ (Lemma~\ref{lem:positive-cone}), and the resulting combinatorics can vary
substantially with the fine interlacing pattern (in particular, with the band decomposition determined
by the indices from Definition~\ref{def:strict-interlacing-oprl}). For the small instance below, the
admissible combinations can be described explicitly and the construction is completely transparent.

\begin{algorithm}[t]
\caption{Reconstruction in the OPRL setting}
\label{alg:oprl-reconstruction}
\begin{tcolorbox}[algobox]
\begin{algorithmic}[1]
\Require Sets $Z_n=\{x_j\}_{j=1}^n$ and $Z_m=\{y_j\}^m_{j=1}$ as in Problem~\ref{prob:underdetermined-oprl}, with $Z_m$ strictly interlacing $Z_n$ in the sense of Definition~\ref{def:strict-interlacing-oprl}.
\Ensure A family $\{P_k\}_{k=0}^n$ and an associated Jacobi matrix $J_n$ consistent with $(Z_n,Z_m)$.

\State Determine the interlacing indices
\[
i_0=0<i_1<\cdots<i_m<n=i_{m+1},
\]
as in Definition~\ref{def:strict-interlacing-oprl}.

\State Define the bands $I_r$ as in Lemma~\ref{lem:positive-cone} and compute the index set \(\mathfrak{J}_{+}\).

\State For each \(\mathcal{J}\in\mathfrak{J}_{+}\), construct the nonnegative vector \(\boldsymbol{\omega}^{(\mathcal{J})}\) via \eqref{eq:circuit-solution}.

\State Parametrise all \(\boldsymbol{\omega}\in\mathcal{W}_{>0}(Z_n,Z_m)\) as adequate conical combinations of the vectors \(\boldsymbol{\omega}^{(\mathcal{J})}\), as in Lemma~\ref{lem:positive-cone}(b).

\State Form the quadrature measure \(\mu=\sum_{j=1}^n \omega_j\,\delta_{x_j}\) and compute the moments
\[
\mu_k=\sum_{j=1}^n \omega_j x_j^k,\quad k=0,\ldots,2n-1.
\]

\State Apply the Stieltjes procedure to recover the recurrence coefficients \(\{\beta_k\}\) and \(\{\gamma_k\}\), compute \(\{P_k\}_{k=0}^{n}\), and assemble the associated Jacobi matrix \(J_n\).
\end{algorithmic}
\end{tcolorbox}
\end{algorithm}

\begin{eje}[Running the algorithm for $n=4$, $m=2$]
\label{ex:oprl}
Let
\[
Z_n=\{x_1,x_2,x_3,x_4\}=\{1,2,3,4\},\quad
Z_m=\{y_1,y_2\}=\Bigl\{\frac{3}{2},\frac{7}{2}\Bigr\}.
\]
Then, $Z_m$ strictly interlaces $Z_n$ in the sense of Definition~\ref{def:strict-interlacing-oprl} (see Figure~\ref{fig:oprl-example-bw}), with
\[
i_0=0<i_1=1<i_2=3<i_3=4.
\]
The associated bands (Lemma~\ref{lem:positive-cone}) are
\[
I_0=\{1\},\quad I_1=\{2,3\},\quad I_2=\{4\},
\]
and therefore
\[
\mathfrak{J}_{+}=\bigl\{\{1,2,4\},\{1,3,4\}\bigr\}.
\]
Using \eqref{eq:circuit-solution}, the two nonnegative circuit solutions are
\[
\boldsymbol{\omega}^{(1)}=\left(\frac{4}{15},\frac{2}{3},0,\frac{2}{15}\right)^{\mathsf T},\quad
\boldsymbol{\omega}^{(2)}=\left(\frac{2}{15},0,\frac{2}{3},\frac{4}{15}\right)^{\mathsf T}.
\]
Hence, by Lemma~\ref{lem:positive-cone}(b), every strictly positive solution can be written as
\[
\boldsymbol{\omega}=\boldsymbol{\omega}^{(1)}+s_1\,\boldsymbol{\omega}^{(2)},\quad s_1>0,
\]
that is,
\[
\boldsymbol{\omega}=\left(\frac{2s_1}{15}+\frac{4}{15},\frac{2}{3},\frac{2s_1}{3},\frac{4s_1}{15}+\frac{2}{15}\right)^{\mathsf T}.
\]
The quadrature measure is $\mu=\sum_{j=1}^4 \omega_j\,\delta_{x_j}$ and the moments are computed by
\[
\mu_k=\sum_{j=1}^4 \omega_j x_j^k,\quad k=0,\dots,7.
\]
Applying the Stieltjes procedure yields the following
recurrence coefficients
\begin{align*}
\beta_0&=\frac{3s_1+2}{s_1+1},\quad
\beta_1=\frac{2s_1+3}{s_1+1},\quad
\beta_2=\frac{3s_1+2}{s_1+1},\quad
\beta_3=\frac{2s_1+3}{s_1+1},\\[7pt]
\gamma_1&=\frac{3s_1^2+10s_1+3}{4(s_1+1)^2},\quad
\gamma_2=\frac{15(s_1+1)^2}{4(3s_1^2+10s_1+3)},\\[7pt]
\gamma_3&=\frac{4s_1(2s_1^2+5s_1+2)}{(s_1+1)(3s_1^3+13s_1^2+13s_1+3)}. \\[3pt]
\end{align*}
The corresponding monic OPRL begin as
\[
P_0(x)=1,\quad
P_1(x)=x-\frac{3s_1+2}{s_1+1},\quad
P_2(x)=x^2-5x+\frac{21}{4},
\]
and $P_4(x)=(x-1)(x-2)(x-3)(x-4)$. We omit $P_3$, see Appendix \ref{app:wolfram-oprl}. Finally, the associated Jacobi matrix $J_4$ is
\[
J_4=
\begin{bmatrix}
\;\beta_0 & 1 & 0 & 0\;\\[7pt]
\;\gamma_1 & \beta_1 & 1 & 0\;\\[7pt]
\;0 & \gamma_2 & \beta_2 & 1\;\\[7pt]
\;0 & 0 & \gamma_3 & \beta_3\;
\end{bmatrix}.
\]
This matrix satisfies
\[
\sigma(J_4)=Z_4,\quad \sigma(J_2)=Z_2.
\]

\end{eje}

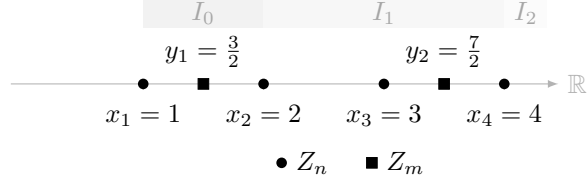
\begin{figure}[H]
\centering
\begin{tikzpicture}[x=1.6cm,y=1cm,>=latex]
  \draw[->,gray!60] (-0.1,0)--(4.45,0) node[right] {$\mathbb{R}$};

  \fill[gray!10] (1,0.75) rectangle (2,1.15);
  \fill[gray!6]  (2,0.75) rectangle (4,1.15);
  \fill[gray!3]  (4,0.75) rectangle (4.35,1.15);

  \node[gray!70] at (1.5,0.95) {$I_0$};
  \node[gray!70] at (3.0,0.95) {$I_1$};
  \node[gray!70] at (4.18,0.95) {$I_2$};

  \foreach \x/\lab in {1/{x_1=1},2/{x_2=2},3/{x_3=3},4/{x_4=4}} {
    \draw[gray!60] (\x,0) -- (\x,-0.08);
    \node[circle,fill=black,inner sep=0pt,minimum size=4pt] at (\x,0) {};
    \node[below] at (\x,-0.16) {$\lab$};
  }

  \node[draw,fill=black,inner sep=0pt,minimum size=4pt] at (1.5,0) {};
  \node[above] at (1.5,0.10) {$y_1=\tfrac{3}{2}$};

  \node[draw,fill=black,inner sep=0pt,minimum size=4pt] at (3.5,0) {};
  \node[above] at (3.5,0.10) {$y_2=\tfrac{7}{2}$};

  \begin{scope}[shift={(2.15,-1.05)}]
    \node[circle,fill=black,inner sep=0pt,minimum size=4pt] at (0,0) {};
    \node[right,xshift=2pt] at (0,0) {$Z_n$};
    \node[draw,fill=black,inner sep=0pt,minimum size=4pt] at (0.75,0) {};
    \node[right,xshift=2pt] at (0.75,0) {$Z_m$};
  \end{scope}
\end{tikzpicture}
\caption{The sets \(Z_n=\{1,2,3,4\}\) (disks) and \(Z_m=\{\tfrac{3}{2},\tfrac{7}{2}\}\) (squares), with bands \(I_0=\{1\}\), \(I_1=\{2,3\}\), \(I_2=\{4\}\).}
\label{fig:oprl-example-bw}
\end{figure}

We have also implemented the same steps for larger values of $n$ and $m$ and for families with
several free parameters. In those cases, while the numerical construction remains stable, the closed
forms for the resulting recurrence coefficients $\beta_k$ and $\gamma_k$ quickly become unwieldy.
For this reason, and so that the reader may verify the example by hand, in the traditional pencil-and-paper manner, we restrict ourselves to the small illustrative example above. In the Appendix~\ref{app:wolfram-oprl} we provide the \textsc{Mathematica} code for this algorithm, so that the reader may experiment with substantially more complex examples without the computation becoming prohibitively time-consuming.

For the POPUC setting, the algorithm is completely analogous: one replaces the OPRL cone of admissible weights by
\(\mathcal{W}_{>0}(Z_n,Z_m)\) from Section~\ref{sec:POPUC}, forms the discrete measure
\(\nu=\sum_{j=1}^n \omega_j\,\delta_{\zeta_j}\), computes the truncated trigonometric moments
\(\mu_k=\sum_{j=1}^n \omega_j\,\zeta_j^{\,k}\) for \(k=-(n-1),\dots,n-1\), and then recovers the
Verblunsky coefficients \(\alpha_0,\dots,\alpha_{n-2}\) by the Szeg\H{o} recursion. 

\begin{algorithm}[t]
\caption{Reconstruction in the POPUC setting}
\label{alg:popuc-reconstruction}
\begin{tcolorbox}[algobox]
\begin{algorithmic}[1]
\Require Sets \(Z_n=\{\zeta_j\}_{j=1}^n\subset\mathbb S^1\) and \(Z_m=\{\xi_k\}_{k=1}^m\subset\mathbb S^1\) as in Problem~\ref{prob:underdetermined-POPUC}, with \(Z_m\) strictly interlacing \(Z_n\) in the sense of Definition~\ref{def:strict-interlacing-POPUC}.
\Ensure A POPUC family \(\{\Psi_k\}_{k=1}^n\) and compatible unitary pentadiagonal matrices \(\mathcal C_n\) and \(\mathcal C_m\) realising \((Z_n,Z_m)\).

\State Fix arguments \(\zeta_j=e^{i\theta_j}\), \(\xi_k=e^{i\varphi_k}\) with
\[
\varphi_1<\theta_1<\cdots<\theta_n<\varphi_1+2\pi,\quad
\varphi_1<\varphi_2<\cdots<\varphi_m<\varphi_{m+1}=\varphi_1+2\pi.
\]

\State Define the bands
\[
I_r=\{\,j\in\{1,\dots,n\}:\ \varphi_r<\theta_j<\varphi_{r+1}\,\},\quad r=1,\dots,m,
\]
and form the admissible index family
\[
\mathfrak J_{+}=\Bigl\{\mathcal J\subset\{1,\dots,n\}:\ |\mathcal J|=m,\ |\mathcal J\cap I_r|=1\ \text{for all }r=1,\dots,m\Bigr\}.
\]

\State For each \(\mathcal J\in\mathfrak J_{+}\), construct the nonnegative circuit vector \(\boldsymbol\omega^{(\mathcal J)}\in\mathbb R^n\) via \eqref{eq:circuit-solution-POPUC-sine}.

\State Parametrise all \(\boldsymbol\omega\in\mathcal W_{>0}(Z_n,Z_m)\) as suitable conical combinations of the vectors \(\boldsymbol\omega^{(\mathcal J)}\), as in Lemma~\ref{lem:positive-solutions-POPUC}(b), ensuring strict positivity in every coordinate.

\State Form the discrete measure
\[
\nu=\sum_{j=1}^n \omega_j\,\delta_{\zeta_j},
\]
and compute the truncated trigonometric moments
\[
\mu_k=\int z^k\,d\nu(z)=\sum_{j=1}^n \omega_j\,\zeta_j^{\,k},\quad k=-(n-1),\dots,n-1,
\quad \mu_{-k}=\overline{\mu_k}.
\]

\State Apply the Schur algorithm to recover the Verblunsky coefficients \(\alpha_0,\dots,\alpha_{n-2}\).

\State Set the boundary parameters from the prescribed zero sets (Lemma~\ref{lemma:POPUClemma}):
\[
b_n=-\overline{\Psi_n(0)},\quad b_m=-\overline{\Psi_m(0)},
\]
where \(\Psi_n(z)=\prod_{j=1}^n (z-\zeta_j)\) and \(\Psi_m(z)=\prod_{k=1}^m (z-\xi_k)\).

\State Assemble the unitary pentadiagonal matrices
\[
\mathcal C_n=\mathcal C(\alpha_0,\dots,\alpha_{n-2},b_n),\quad
\mathcal C_m=\mathcal C(\alpha_0,\dots,\alpha_{m-2},b_m),
\]
and, if desired, recover \(\Psi_k(z)=\det(zI_k-\mathcal C_k)\) (up to a unimodular constant) for \(k=1,\dots,n\).

\end{algorithmic}
\end{tcolorbox}
\end{algorithm}

\begin{eje}[Running the construction for \(n=3\), \(m=2\)]
\label{ex:popuc}
Let
\[
Z_n=\{\zeta_1,\zeta_2,\zeta_3\}=\Bigl\{i,\,e^{\frac{4\pi i}{3}},\,e^{\frac{5\pi i}{3}}\Bigr\},\quad
Z_m=\{\xi_1,\xi_2\}=\{1,-1\}.
\]
Write \(\zeta_j=e^{i\theta_j}\) and \(\xi_k=e^{i\varphi_k}\) with
\[
(\theta_1,\theta_2,\theta_3)=\Bigl(\frac{\pi}{2},\frac{4\pi}{3},\frac{5\pi}{3}\Bigr),\quad
(\varphi_1,\varphi_2)=(0,\pi).
\]
Fix the base point \(\varphi_1\) and view all arguments in the interval \((\varphi_1,\varphi_1+2\pi)=(0,2\pi)\).
Then \(Z_m\) strictly interlaces \(Z_n\) (see Figure~\ref{fig:popuc-example-bw}), and the bands between consecutive points of \(Z_m\) are
\[
I_1=\{j:\ 0<\theta_j<\pi\}=\{1\},\quad
I_2=\{j:\ \pi<\theta_j<2\pi\}=\{2,3\}.
\]
Hence
\[
\mathfrak{J}_{+}=\bigl\{\{1,2\},\{1,3\}\bigr\}.
\]
Using \eqref{eq:circuit-solution-POPUC-sine}, the
corresponding nonnegative circuit solutions are
\[
\boldsymbol{\omega}^{(1)}=\Bigl(2(\sqrt6-\sqrt2),\,\frac{4}{3}(3\sqrt2-\sqrt6),\,0\Bigr)^{\mathsf T},\quad
\boldsymbol{\omega}^{(2)}=\Bigl(2(\sqrt6-\sqrt2),\,0,\,\frac{4}{3}(3\sqrt2-\sqrt6)\Bigr)^{\mathsf T}.
\]
Therefore every strictly positive solution is obtained by a conical combination covering all indices, for instance
\[
\boldsymbol{\omega}=\boldsymbol{\omega}^{(1)}+s_1\,\boldsymbol{\omega}^{(2)},\quad s_1>0,
\]
that is,
\[
\boldsymbol{\omega}=\Bigl(2(\sqrt6-\sqrt2)(1+s_1),\,\frac{4}{3}(3\sqrt2-\sqrt6),\,\frac{4s_1}{3}(3\sqrt2-\sqrt6)\Bigr)^{\mathsf T}.
\]
 For concreteness, we fix $s_1=1$ and set \(\nu=\sum_{j=1}^3\omega_j\,\delta_{\zeta_j}\). The truncated trigonometric moments are
\[
\mu_k=\int z^k\,d\nu(z)=\sum_{j=1}^3\omega_j\,\zeta_j^{\,k},\quad k=-2,-1,0,1,2,
\quad \mu_{-k}=\overline{\mu_k}.
\]
A direct computation gives
\[
\mu_0=4\sqrt2+\frac{4\sqrt6}{3},\quad \mu_1=0,\quad \mu_2=-\frac{8\sqrt6}{3}.
\]
Applying the Schur algorithm yields
\[
\alpha_0=\frac{\overline{\mu_1}}{\mu_0}=0,\quad
\alpha_1=\frac{\overline{\mu_2}}{\mu_0}=1-\sqrt3,\quad
\rho_1=\sqrt{1-|\alpha_1|^2}=\sqrt{2\sqrt3-3}.
\]
Moreover, the unimodular parameters are fixed by the prescribed zeros,
\[
b_3=i,\quad b_2=1.
\]
Finally,
\[
\mathcal{C}_3=\mathcal{C}(\alpha_0,\alpha_1,b_3)=
\begin{bmatrix}
0 & 1-\sqrt3 & \sqrt{2\sqrt3-3}\;\\[7pt]
1 & 0 & 0\\[7pt]
0 & -\sqrt{3-2\sqrt3} & i(1-\sqrt3)
\end{bmatrix},\quad
\mathcal{C}_2=\mathcal{C}(\alpha_0,b_2)=
\begin{bmatrix}
0 & 1\\[7pt]
1 & 0
\end{bmatrix},
\]
satisfy 
\[
\sigma(\mathcal{C}_3)=Z_n,\quad \sigma(\mathcal{C}_2)=Z_m.
\]
\end{eje}

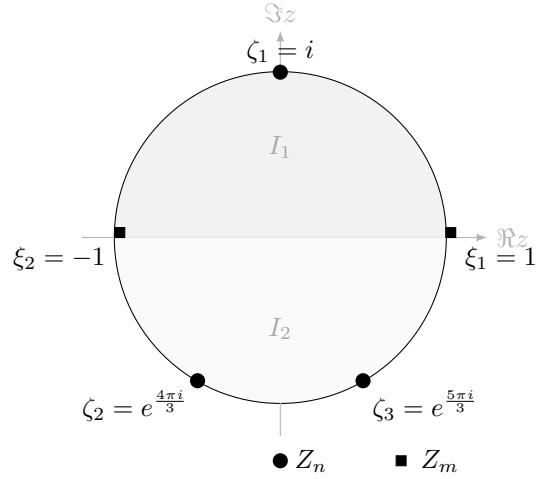
\begin{figure}[H]
\centering
\begin{tikzpicture}[scale=2.2,>=latex]
  \draw[thick] (0,0) circle (1);

  \draw[->,gray!60] (-1.2,0)--(1.25,0) node[right] {$\Re z$};
  \draw[->,gray!60] (0,-1.2)--(0,1.25) node[above] {$\Im z$};

  \fill[gray!10] (1,0) arc[start angle=0,end angle=180,radius=1] -- (0,0) -- cycle;
  \fill[gray!4]  (-1,0) arc[start angle=180,end angle=360,radius=1] -- (0,0) -- cycle;

  \node[gray!70] at (0,0.55) {$I_1$};
  \node[gray!70] at (0,-0.55) {$I_2$};

  \filldraw[black] (1,0)   rectangle ++(0.06,0.06);
  \node[below right] at (1.06,0) {$\xi_1=1$};

  \filldraw[black] (-1,0)  rectangle ++(0.06,0.06);
  \node[below left] at (-1,0) {$\xi_2=-1$};

  \fill[black] (0,1) circle (1.3pt);
  \node[above] at (0,1) {$\zeta_1=i$};

  \fill[black] (-0.5,-0.8660254) circle (1.3pt);
  \node[below left] at (-0.5,-0.8660254) {$\zeta_2=e^{\frac{4\pi i}{3}}$};

  \fill[black] (0.5,-0.8660254) circle (1.3pt);
  \node[below right] at (0.5,-0.8660254) {$\zeta_3=e^{\frac{5\pi i}{3}}$};

  \begin{scope}[shift={(0,-1.35)}]
    \fill[black] (0,0) circle (1.3pt);
    \node[right,xshift=2pt] at (0,0) {$Z_n$};
    \filldraw[black] (0.7,-0.03) rectangle ++(0.06,0.06);
    \node[right,xshift=2pt] at (0.76,0) {$Z_m$};
  \end{scope}
\end{tikzpicture}
\caption{The sets \(Z_n=\{i,e^{4\pi i/3},e^{5\pi i/3}\}\) (disks) and \(Z_m=\{1,-1\}\) (squares) on the unit circle, with bands \(I_1=(0,\pi)\) and \(I_2=(\pi,2\pi)\).}
\label{fig:popuc-example-bw}
\end{figure}
The above construction is straightforward to implement and works well in practice. However, in the POPUC setting the
appearance of complex phases (already at the level of the circuit weights and the trigonometric moments) makes the
symbolic expressions grow rapidly and become less readable than in the OPRL case. 

\section{Final remarks: beyond two spectra}\label{sec:remarks}
Theorems~\ref{thm:OPRL-solution} and~\ref{thm:POPUC-solution} show that the expected strict
interlacing condition is the sharp criterion for solvability of the underdetermined two-spectra
problems considered in this paper. Moreover, whenever \(m<n-1\) the solution is never unique: the
admissible measures (equivalently, the admissible Jacobi/unitary
pentadiagonal matrices) form a family parametrized by
the strictly positive solutions of the corresponding Vandermonde-type system. This nonuniqueness is
already visible in the free parameters appearing in the examples of Section~\ref{sec:algos}. In
general, the farther \(m\) lies from \(n\), and the more uneven the interlacing pattern is across the
bands, the larger the dimension of the feasible cone becomes and the more intricate the resulting
parametrizations of the admissible weights (and hence of the recurrence data).

A natural extension is the multispectra setting: prescribe \(Z_n\) together with several spectra
\(Z_{m_1},Z_{m_2},\dots\) coming from leading principal submatrices of sizes
\(m_1<m_2<\cdots<n\), in either a underdetermined or (at most) fully determined regime. It is
immediate that pairwise strict interlacing is necessary, but it is not sufficient in general. For
instance, when \(m_1=n-1\) the data \((Z_n,Z_{m_1})\) already determine a unique Jacobi matrix, so
one may choose a set \(Z_{m_2}\) that strictly interlaces \(Z_n\) (and also interlaces \(Z_{m_1}\) in
the expected way) but that does not coincide with the spectrum of the actual leading principal
submatrix \(J_{m_2}\) of the uniquely determined matrix. This suggests a genuine compatibility
problem beyond pairwise interlacing.

From the present viewpoint, Lemma~\ref{lemma:OPRLlemma} (and its POPUC analogue) can be adapted with
no essential changes: one obtains a single linear system whose coefficient matrix has several
Vandermonde-type blocks, each weighted by the corresponding polynomial \(P_{m_i}\). Interlacing
alone does not resolve positivity of the solutions in this higher-block system, but solvability can
still be phrased as the existence of a strictly positive solution of the resulting Vandermonde
system. This opens the door to a mixed analytic--computational approach: solve the block system
explicitly (or parametrically), then impose positivity by standard analytic criteria. While the
necessary hypotheses and a complete structural description remain to be developed, the numerical
algorithm of Section~\ref{sec:algos} extends directly to this setting by treating the linear system
with multiple blocks and searching for positive solutions. For the sake of conciseness, we omit
examples in the multispectra case.

Finally, it would be very interesting to complement the above feasibility reformulation with
\emph{intrinsic} criteria for the multispectra regime, closer in spirit to the interlacing tests in the
two--spectra case. In other words, one would like necessary and sufficient conditions that encode
compatibility across levels \(m_1<m_2<\cdots<n\) in a genuinely analytic way (e.g.\ inequalities or
structured identities), rather than via solving the multi-block Vandermonde system and then checking
positivity. We regard this as a natural and nontrivial extension of the present work, and we hope
that the formulation in terms of block systems and positive solutions may serve as a useful starting
point for such a characterization.

\section*{Acknowledgements}
The authors acknowledge financial support from the Centre for Mathematics of the University of Coimbra (CMUC), funded by the Portuguese Foundation for Science and Technology (FCT), under the projects UID/00324/2025 (\url{https://doi.org/10.54499/UID/00324/2025}) and UID/PRR/00324/2025. The first author acknowledges financial support from the FCT under the grant \url{https://doi.org/10.54499/2022.00143.CEECIND/CP1714/CT0002}. The second author acknowledges financial support from FCT under the grant \href{https://doi.org/10.54499/UIDB.154694.2023}{DOI: 10.54499/UIDB.154694.2023}.

\bibliographystyle{amsplain}
\bibliography{bib}

\appendix
\section{Mathematica code}\label{app:wolfram-oprl}

As an illustration, we provide below a \textsc{Mathematica} implementation of Algorithm~\ref{alg:oprl-reconstruction}. The reader may proceed analogously, when needed, with Algorithm~\ref{alg:popuc-reconstruction}.

\lstset{style=wlcode}
\begin{lstlisting}[caption={Wolfram Language code for the OPRL reconstruction.},label={lst:wolfram-oprl}]
ClearAll["Global`*"];
t = Symbol["t"];

AssertTrue[cond_, msg_String] :=
  If[TrueQ[cond], Null, Print["[ERROR] ", msg]; Abort[]];

MonicFromZeros[Z_List] := Expand[Times @@ (t - # & /@ Z)];

BandsFromInterlacingRL[ZnSorted_List, ZmSorted_List] :=
 Module[{n, m, bands, j},
  n = Length[ZnSorted];
  m = Length[ZmSorted];
  bands = Table[{}, {m + 1}];
  j = 1;
  Do[
   While[j <= m && ZnSorted[[i]] > ZmSorted[[j]], j++];
   bands[[j]] = Append[bands[[j]], i];
   ,
   {i, 1, n}
  ];
  bands
];

MonicPolyFromZerosRL[zeros_List] := MonicFromZeros[zeros];

OmegaPrimeAtNodeRL[Zn_List, J_List, i_Integer] :=
 Times @@ (Zn[[i]] - Zn[[#]] & /@ DeleteCases[J, i]);

LambdaFromIndicesRL[Zn_List, Pm_, J_List] :=
 Module[{n, lam, denom, sgn},
  n = Length[Zn];
  lam = ConstantArray[0, n];
  Do[
   denom = (Pm /. t -> Zn[[i]])*OmegaPrimeAtNodeRL[Zn, J, i];
   lam[[i]] = 1/denom;
   ,
   {i, J}
  ];
  sgn = Sign[Total[Sign /@ N[lam[[J]], 30]]];
  If[sgn < 0, lam = -lam];
  lam
];

InnerRL[p_, q_, Zn_List, w_List] :=
 Total[w*(p /. t -> Zn)*(q /. t -> Zn)];

StieltjesMonicOPRL[Zn_List, w_List] :=
 Module[{n, P, h, beta, gamma, k, Pk, hk, xPk, betaK, gammaK, Pnext},
  n = Length[Zn];
  P = ConstantArray[0, n + 1];
  h = ConstantArray[0, n + 1];
  beta = ConstantArray[0, n];
  gamma = ConstantArray[0, n];

  P[[1]] = 1;
  h[[1]] = InnerRL[P[[1]], P[[1]], Zn, w];

  Do[
   Pk = P[[k + 1]];
   hk = h[[k + 1]];
   xPk = Expand[t*Pk];

   betaK = InnerRL[xPk, Pk, Zn, w]/hk;
   beta[[k + 1]] = betaK;

   If[k == 0,
    Pnext = Expand[xPk - betaK*Pk],
    gammaK = h[[k + 1]]/h[[k]];
    gamma[[k + 1]] = gammaK;
    Pnext = Expand[xPk - betaK*Pk - gammaK*P[[k]]]
   ];

   P[[k + 2]] = Pnext;
   h[[k + 2]] = InnerRL[Pnext, Pnext, Zn, w];
   ,
   {k, 0, n - 1}
  ];

  P
];

OPRLFamily[ZnIn_List, ZmIn_List, specRules_ : Automatic] :=
 Module[{Zn, Zm, n, m, bands, Jplus, Pm, lambdaVectors, K, params,
   rules, wSym, wUse, PList},

  Zn = Sort@ZnIn;
  Zm = Sort@ZmIn;
  n = Length[Zn];
  m = Length[Zm];

  AssertTrue[1 <= m < n, "Require 1 <= m < n."];
  AssertTrue[And @@ Thread[Zn =!= 0], "Assume Zn has no zero (optional)."];

  bands = BandsFromInterlacingRL[Zn, Zm];
  AssertTrue[And @@ (Length[#] >= 1 & /@ bands), "Some band is empty."];

  Jplus = Tuples[bands];
  Pm = MonicPolyFromZerosRL[Zm];

  lambdaVectors = LambdaFromIndicesRL[Zn, Pm, #] & /@ Jplus;
  K = Length[lambdaVectors];

  If[specRules === Automatic,
   wUse = Total[lambdaVectors];
   rules = {};
   ,
   params = If[K <= 1, {}, Array[s, K - 1]];
   wSym =
    If[K <= 1,
     lambdaVectors[[1]],
     lambdaVectors[[1]] + Sum[params[[j]] lambdaVectors[[j + 1]], {j, 1, K - 1}]
    ];
   rules = Which[specRules === {}, {}, True, specRules];
   wUse = wSym /. rules;
  ];

  PList = StieltjesMonicOPRL[Zn, wUse];

  Print["Bands I_r = ", bands];
  If[Length[Jplus] <= 20,
   Print["J_+ = ", Jplus],
   Print["J_+ has length ", Length[Jplus], " (showing first 10) = ", Take[Jplus, 10]]
  ];
  If[specRules =!= Automatic && K > 1, Print["w(params) = ", wSym]];
  If[specRules =!= Automatic && rules =!= {}, Print["specialisation = ", rules]];
  Print["w = ", wUse];
  Print["OPRL P_k(t), k = 1,...,", n, ":"];
  Do[Print["P_", k, "(t) = ", PList[[k + 1]]], {k, 1, n}];

  Rest[PList]
];
\end{lstlisting}

The routine \lstinline[style=wlcode]|OPRLFamily| is a direct implementation of the construction described in
Lemmas~\ref{lem:circuit-solutions} and~\ref{lem:positive-cone} (and therefore of the reconstruction mechanism in
Theorem~\ref{thm:OPRL-solution}). Given $(Z_n,Z_m)$, the command
\lstinline[style=wlcode]|BandsFromInterlacingRL| computes the band decomposition
\(
\{1,\dots,n\}=\bigsqcup_{r=0}^{m} I_r
\)
associated with the interlacing indices from Definition~\ref{def:strict-interlacing-oprl}. The subsequent line
\lstinline[style=wlcode]|Jplus = Tuples[bands]| enumerates the admissible index family
\[
\mathfrak{J}_{+}
=
\Bigl\{\mathcal{J}\subset\{1,\dots,n\}:\ |\mathcal{J}|=m+1,\ |\mathcal{J}\cap I_r|=1\ \text{for all }r=0,\dots,m\Bigr\},
\]
as in Lemma~\ref{lem:positive-cone}. For each $\mathcal{J}\in\mathfrak{J}_{+}$, the function
\lstinline[style=wlcode]|LambdaFromIndicesRL| constructs the corresponding sparse circuit solution
\(\boldsymbol{\omega}^{(\mathcal{J})}\) given by \eqref{eq:circuit-solution} (implemented through the factor
\(P_m(x_i)\,Q_{\mathcal{J}}'(x_i)\) in the denominator), and performs the global sign choice from
Lemma~\ref{lem:positive-cone} so that the resulting vector is entrywise nonnegative. The weight vector used for the Stieltjes reconstruction is denoted by \lstinline[style=wlcode]|wUse| in the code; it
corresponds to a choice \(\boldsymbol{\omega}\in\mathcal{W}_{>0}(Z_n,Z_m)\) in the notation of
Section~\ref{sec:OPRL}. If one calls \lstinline[style=wlcode]|OPRLFamily[Zn, Zm]| with only two arguments, then the third
argument \lstinline[style=wlcode]|specRules| takes its default value \lstinline[style=wlcode]|Automatic|. In that case the
routine does \emph{not} introduce any symbolic parameters \lstinline[style=wlcode]|s[1]|, \lstinline[style=wlcode]|s[2]|,
etc. Instead, it selects a concrete strictly positive weight vector by the explicit choice
\[
\boldsymbol{\omega}
=\sum_{\mathcal{J}\in\mathfrak{J}_{+}}\boldsymbol{\omega}^{(\mathcal{J})},
\]
that is, it assigns coefficient $1$ to every nonnegative circuit vector and sums them all. This produces an element of
\(\mathcal{W}_{>0}(Z_n,Z_m)\) (Lemma~\ref{lem:positive-cone}(b)) without displaying any free parameters. The subsequent
Stieltjes step is then carried out with this fixed vector \(\boldsymbol{\omega}\), yielding one admissible OPRL family
\(\{P_k\}_{k=0}^n\) and one associated Jacobi matrix \(J_n\) consistent with \((Z_n,Z_m)\). If instead one calls \lstinline[style=wlcode]|OPRLFamily[Zn, Zm, {}]|, the routine keeps a symbolic parametrization of the
cone
\[
\mathcal{W}_{\ge0}(Z_n,Z_m)
=
\operatorname{cone}\bigl\{\boldsymbol{\omega}^{(\mathcal{J})}:\ \mathcal{J}\in\mathfrak{J}_{+}\bigr\}
\]
by forming a linear combination with free parameters \lstinline[style=wlcode]|s[1]|, \lstinline[style=wlcode]|s[2]|, \dots\ .
Finally, providing a list of replacement rules, e.g.
\lstinline[style=wlcode]|OPRLFamily[Zn, Zm, {s[1] -> 1}]|,
specializes these parameters and yields the corresponding admissible weights and reconstructed family.

For the same data $(Z_n,Z_m)$ we present three representative outputs: the default call
\lstinline[style=wlcode]|OPRLFamily[Zn, Zm]| (which selects a concrete strictly positive weight vector),
the symbolic parametrisation
\lstinline[style=wlcode]|OPRLFamily[Zn, Zm, {}]| (which keeps the free parameter(s) \lstinline[style=wlcode]|s[1]|, \lstinline[style=wlcode]|s[2]|, \dots\ visible),
and a concrete specialisation such as
\lstinline[style=wlcode]|OPRLFamily[Zn, Zm, {s[1] -> 3}]|.

For the small test case \(Z_4=\{1,2,3,4\}\) and \(Z_2=\{\tfrac{3}{2},\tfrac{7}{2}\}\), we run
\lstinline[style=wlcode]|OPRLFamily[{1, 2, 3, 4}, {3/2, 7/2}]|
and record its output below.

\lstset{style=wlcode}
\begin{lstlisting}
OPRLFamily[{1, 2, 3, 4}, {3/2, 7/2}];

Bands I_r = {{1},{2,3},{4}}

J_+ = {{1,2,4},{1,3,4}}

w = {2/5,2/3,2/3,2/5}

OPRL P_k(t), k = 1,...,4:

P_1(t) = -(5/2)+t

P_2(t) = 21/4-5 t+t^2

P_3(t) = -(345/32)+(269 t)/16-(15 t^2)/2+t^3

P_4(t) = 24-50 t+35 t^2-10 t^3+t^4
\end{lstlisting}

For the same data, we next display the symbolic parametrisation produced by
\lstinline[style=wlcode]|OPRLFamily[{1, 2, 3, 4}, {3/2, 7/2}, {}]|.

\lstset{style=wlcode}
\begin{lstlisting}
OPRLFamily[{1, 2, 3, 4}, {3/2, 7/2},{}];

Bands I_r = {{1},{2,3},{4}}

J_+ = {{1,2,4},{1,3,4}}

w(params) = {4/15 + (2 s[1])/15, 2/3, (2 s[1])/3, 2/15 + (4 s[1])/15}

w = {4/15 + (2 s[1])/15, 2/3, (2 s[1])/3, 2/15 + (4 s[1])/15}

OPRL P_k(t), k = 1,...,4:

P_1(t) = -((2 + 3 s[1])/(1 + s[1])) + t

P_2(t) = 21/4 - 5 t + t^2

P_3(t) = 3*(-12 s[1]^3 - 53 s[1]^2 - 42 s[1] - 8)/(3 s[1]^3 + 13 s[1]^2 + 13 s[1] + 3)
         + (57 s[1]^3 + 237 s[1]^2 + 202 s[1] + 42)/(3 s[1]^3 + 13 s[1]^2 + 13 s[1] + 3) t
         - (8 s[1] + 7)/(s[1] + 1) t^2 + t^3

P_4(t) = 24 - 50 t + 35 t^2 - 10 t^3 + t^4
\end{lstlisting}

Finally, we specialise the free parameter by setting \(s_1=3\), that is, we run
\lstinline[style=wlcode]|OPRLFamily[{1, 2, 3, 4}, {3/2, 7/2}, {s[1] -> 3}]|.

\lstset{style=wlcode}
\begin{lstlisting}
OPRLFamily[{1, 2, 3, 4}, {3/2, 7/2}, {s[1] -> 3}];

OPRLFamily[{1, 2, 3, 4}, {3/2, 7/2}, {s[1] -> 3}];

Bands I_r = {{1},{2,3},{4}}

J_+ = {{1,2,4},{1,3,4}}

w(params) = {4/15+(2 s[1])/15,2/3,(2 s[1])/3,2/15+(4 s[1])/15}

specialisation = {s[1]->3}

w = {2/3,2/3,2,14/15}

OPRL P_k(t), k = 1,...,4:

P_1(t) = -(11/4)+t

P_2(t) = 21/4-5 t+t^2

P_3(t) = -(187/16)+18 t-(31 t^2)/4+t^3

P_4(t) = 24-50 t+35 t^2-10 t^3+t^4
\end{lstlisting}

Observe that the cubic polynomial
\[
P_3(t)=t^3-\frac{31}{4}t^2+18t-\frac{187}{16}
\]
in the latter output differs from
\[
P_3(t)=t^3-\frac{15}{2}t^2+\frac{269}{16}t-\frac{345}{32}
\]
in the former. In the present framework this is precisely what one should expect in the
{underdetermined} regime \(m<n-1\): fixing the two spectra \(Z_n\) and \(Z_m\) pins down
\(P_n\) and \(P_m\), but it leaves a genuine family of compatible Jacobi data, parametrised by the
strictly positive solution set \(\mathcal W_{>0}(Z_n,Z_m)\) of the Vandermonde system
\eqref{eq:vandermonde} (Lemma~\ref{lemma:OPRLlemma}). In the concrete instance of Example~\ref{ex:oprl}, strict interlacing yields the band decomposition
\[
I_0=\{1\},\quad I_1=\{2,3\},\quad I_2=\{4\},
\]
and Lemma~\ref{lem:positive-cone} shows that the nonnegative cone \(\mathcal W_{\ge 0}(Z_n,Z_m)\) is
generated by the circuit vectors supported on \(\mathfrak J_{+}\), i.e.\ by choosing exactly one
index from each band. Here \(\mathfrak J_{+}=\{\{1,2,4\},\{1,3,4\}\}\), so the cone has exactly two
extreme rays, generated by \(\boldsymbol\omega^{(1)}\) and \(\boldsymbol\omega^{(2)}\), and every
strictly positive solution is of the form
\[
\boldsymbol\omega=\boldsymbol\omega^{(1)}+s_1\,\boldsymbol\omega^{(2)},
\quad s_1\in I=(0,\infty),
\]
which is nothing but the relative interior description
$$
\mathcal W_{>0}(Z_n,Z_m)=\operatorname{relint}(\mathcal W_{\ge0}(Z_n,Z_m))
$$ in this one-parameter case.
Thus the ``admissible interval'' \(I\) for the free parameter is exactly the positivity window:
at the boundary \(s_1=0\) one weight vanishes and we leave \(\mathcal W_{>0}\), while for every
\(s_1>0\) all weights are strictly positive and the reconstruction mechanism applies. The two displayed cubics correspond to two different admissible choices of \(s_1\) inside \(I\):
the default output in the code is obtained at \(s_1=1\), whereas the specialised run
\(s_1=3\) produces the alternative \(P_1\) and \(P_3\) shown later. In other words, varying the
single parameter \(s_1\) within the admissible interval \(I\) moves one along the (non-unique)
solution set of the underdetermined two-spectra problem, changing the intermediate polynomials
(such as \(P_1\) and \(P_3\)) while keeping \(P_2\) and \(P_4\) fixed by the prescribed data.

The output of the following example is omitted, as it would require more printed pages than the present paper. The reader may, however, paste the code into \textsc{Mathematica}, evaluate it, and obtain the complete output.

\lstset{style=wlcode}
\begin{lstlisting}

OPRLFamily[{1, 3, 5, 7, 9, 11, 13, 15, 17, 19, 21, 23, 25, 27, 29, 31,
    33, 35, 37, 39, 41, 43, 45, 47, 49, 51, 53, 55, 57, 59, 61, 63, 
   65, 67, 69, 71, 73, 75, 77, 79, 81, 83, 85, 87, 89, 91, 93, 95, 97,
    99, 101, 103, 105, 107, 109, 111, 113, 115, 117, 119}, {2, 4, 6, 
   8, 10, 12, 14, 16, 18, 20, 22, 24, 26, 28, 30, 32, 34, 36, 38, 40, 
   42, 44, 46, 48, 50, 52, 54, 56, 58, 60, 62, 64, 66, 68, 70}];
\end{lstlisting}

\end{document}